\renewcommand{\thesection}{\arabic{section}}
\titleformat{\section}{\Large\bf\boldmath}{\thesection.}{2ex}{}{}
\titlespacing{\section}{0ex}{2ex}{1ex}
\renewcommand{\thesubsection}{\arabic{section}.\arabic{subsection}}
\titleformat{\subsection}{\large\bf\boldmath}{\thesubsection.}{2ex}{}{}
\titlespacing{\section}{0ex}{1.5ex}{0.5ex}
\numberwithin{equation}{section}
{\theoremstyle{definition}\newtheorem{definition}{Definition}[section]

\newtheorem{remark}[definition]{Remark}
\newtheorem{assumptions}[definition]{Assumptions}
\newtheorem{remarkletter}{Remark}
}
\newtheorem{lemma}[definition]{Lemma}
\newtheorem{theorem}[definition]{Theorem}
\newtheorem{theoremletter}[remarkletter]{Theorem}
\newtheorem{corollary}[definition]{Corollary}
\newcommand{\C}{\mathbb{C}}
\newcommand{\F}{\mathbb{F}}
\newcommand{\actson}{\curvearrowright}
\newcommand{\Aut}{\operatorname{Aut}}
\newcommand{\T}{\mathbb{T}}
\newcommand{\Z}{\mathbb{Z}}
\newcommand{\cF}{\mathcal{F}}
\newcommand{\id}{\mathord{\operatorname{id}}}
\newcommand{\recht}{\rightarrow}
\newcommand{\cU}{\mathcal{U}}
\newcommand{\vphi}{\varphi}
\newcommand{\R}{\mathbb{R}}
\newcommand{\al}{\alpha}
\newcommand{\eps}{\varepsilon}
\newcommand{\Tr}{\operatorname{Tr}}
\newcommand{\ovt}{\mathbin{\overline{\otimes}}}
\newcommand{\om}{\omega}
\newcommand{\cZ}{\mathcal{Z}}
\newcommand{\ot}{\otimes}
\newcommand{\Ad}{\operatorname{Ad}}
\newcommand{\cG}{\mathcal{G}}
\newcommand{\dpr}{^{\prime\prime}}
\newcommand{\be}{\beta}
\newcommand{\Mtil}{\widetilde{M}}
\newcommand{\Ntil}{\widetilde{N}}
\newcommand{\Stab}{\operatorname{Stab}}
\newcommand{\PSL}{\operatorname{PSL}}
\newcommand{\cS}{\mathcal{S}}
\newcommand{\Om}{\Omega}
\newcommand{\Centr}{\operatorname{Centr}}
\newcommand{\module}{\operatorname{mod}}
\newcommand{\Btil}{\widetilde{B}}
\newcommand{\Atil}{\widetilde{A}}
\newcommand{\Prob}{\operatorname{Prob}}
\newcommand{\Norm}{\operatorname{Norm}}
\begin{document}

\begin{center}
{\boldmath\Large\bf  Families of hyperfinite subfactors with the same standard \vspace{0.5ex}\\ invariant and prescribed fundamental group}
\bigskip

{\sc by Arnaud Brothier\footnote{KU~Leuven, Department of Mathematics, Leuven (Belgium), arnaud.brothier@wis.kuleuven.be \\
    Supported by ERC Starting Grant VNALG-200749} and Stefaan Vaes\footnote{KU~Leuven, Department of Mathematics, Leuven (Belgium), stefaan.vaes@wis.kuleuven.be \\
    Supported by ERC Starting Grant VNALG-200749, Research Programme G.0639.11 of the Research Foundation~-- Flanders (FWO) and KU~Leuven BOF research grant OT/13/079.}}
\end{center}

\begin{abstract}\noindent
We construct irreducible hyperfinite subfactors of index 6 with a prescribed fundamental group from a large family containing all countable and many uncountable subgroups of $\R_+$. We also prove that there are unclassifiably many irreducible hyperfinite group-type subfactors of index 6 that all have the same standard invariant. More precisely, we associate such a subfactor to every ergodic measure preserving automorphism of the interval $[0,1]$ and prove that the resulting subfactors are isomorphic if and only if the automorphisms are conjugate.
\end{abstract}

\section{Introduction and statement of the main results}

To every inclusion of II$_1$ factors $N \subset M$ with finite Jones index \cite{Jo82a} is associated a group-like object $\cG_{N \subset M}$ called the standard invariant. In \cite{Po92}, Popa proved the fundamental result that every strongly amenable standard invariant arises from precisely one hyperfinite subfactor. When the standard invariant is nonamenable, much less is known. It is for instance wide open to decide at which index values larger than $4$, the $A_\infty$ Temperley-Lieb standard invariant arises from a hyperfinite subfactor, and if it does, whether this subfactor is unique or not.

In \cite{BH95}, Bisch and Haagerup associated to every countable group $\Gamma$ generated by finite subgroups $H,K \subset \Gamma$ and to every outer action $(\al_g)_{g \in \Gamma}$ of $\Gamma$ on the hyperfinite II$_1$ factor $R$, the \emph{group-type subfactor} $S(\al) : R^H \subset R \rtimes K$. This construction gives rise to a wealth of infinite depth subfactors with different types of properties (amenable vs.\ strongly amenable, property~(T), etc). Popa proved in \cite{Po01a} a deep cocycle superrigidity theorem for Connes-St{\o}rmer Bernoulli actions of infinite property (T) groups $\Gamma$ and used this to show that all these groups $\Gamma$ admit uncountably many non outer conjugate actions $(\al_g)_{g \in \Gamma}$. This result was then applied in \cite{BNP06} to property (T) groups $\Gamma$ generated by subgroups $H \cong \Z/2\Z$ and $K \cong \Z/3\Z$ and implies that the resulting subfactors $S(\al)$ are nonisomorphic, but nevertheless all have the same standard invariant.

%The classification of these subfactors $S(\al)$ up to isomorphism is intimately related to the classification of the actions $\al$ up to outer conjugacy, see \cite[Proposition 3.1]{BNP06}. On the other hand, by \cite[Theorem 5.1]{BDG08}, all subfactors $S(\al)$ have the same standard invariant, entirely determined by the inclusions $H,K \subset \Gamma$.
%
%So if moreover $\Gamma$ is generated by subgroups $H \cong \Z/2\Z$ and $K \cong \Z/3\Z$, the resulting subfactors $S(\al)$ are nonisomorphic, with index 6 and all have the same standard invariant.

Also amplifications can give rise to nonisomorphic subfactors with the same standard invariant. If $N \subset M$ is a subfactor and $t > 0$, the \emph{amplification} $(N \subset M)^t$ is defined as follows~: choose a projection $p \in M_n(\C) \ot N$ with $(\Tr \ot \tau)(p) = t$ and define $(N \subset M)^t$ as the inclusion $p(M_n(\C) \ot N)p \subset p(M_n(\C) \ot M)p$. Following \cite[Definition 5.4.7]{Po87}, the \emph{relative fundamental group} of the subfactor $N \subset M$ is then defined as
$$\cF(N \subset M) = \{t > 0 \mid (N \subset M)^t \cong (N \subset M)\}$$
and is a subgroup of $\R_+$. For the group-type subfactors $S(\al) : R^H \subset R \rtimes K$, it is shown in \cite{BNP06} that $\cF(N \subset M)$ is a subgroup of the fundamental group $\cF(\al)$ introduced in \cite{Po01a}. Since it is proven in \cite{Po01a} that the noncommutative Bernoulli actions of an infinite property (T) group have trivial fundamental group, the resulting subfactors $S(\al)$ also have trivial relative fundamental group and the amplifications $S(\al)^t$, $t > 0$, form an uncountable family of nonisomorphic subfactors with the same standard invariant.

In Theorem \ref{thm.A} below, we refine the above results and show that there are ``unclassifiably'' many nonisomorphic subfactors of index 6 with the same standard invariant. More precisely, to every ergodic measure preserving automorphism $\Delta$ of the interval $[0,1]$, we associate an outer action $\al^\Delta$ of the modular group $\PSL(2,\Z) = \Z/2\Z * \Z/3\Z$ on the hyperfinite II$_1$ factor $R$ and consider the corresponding subfactor $S(\al^\Delta) : R^{\Z/2\Z} \subset R \rtimes \Z/3\Z$. All these subfactors $S(\al^\Delta)$ have index 6 and the same standard invariant $\cG$. We prove that the subfactor $S(\al^\Delta)$ is isomorphic with $S(\al^{\Delta'})$ if and only if $\Delta$ is conjugate to $\Delta'$, meaning that $\Delta' = \theta \circ \Delta \circ \theta^{-1}$ for some measure preserving transformation $\theta$. Since the classification of ergodic transformations up to conjugacy is wild in any possible sense (see e.g.\ \cite{Hj01,FRW08}), the classification of hyperfinite index 6 subfactors with standard invariant $\cG$ is at least as wild.

In Theorem \ref{thm.A}, we also construct outer actions $\al$ of the modular group $\PSL(2,\Z)$ such that the resulting subfactor $S(\al)$ has any prescribed relative fundamental group from the large family $\cS$ of subgroups of $\R_+$ studied in \cite[Section 2]{PV08}. This family $\cS$ contains all countable subgroups of $\R_+$, as well as many uncountable subgroups that can have any Hausdorff dimension between 0 and 1.

Note that the main result of \cite{PV08} showed that all groups in the family $\cS$ arise as the fundamental group $\cF(M)$ of a II$_1$ factor $M$ with separable predual. The result in \cite{PV08} is an existence theorem that ultimately relies on a Baire category argument. Explicit examples of II$_1$ factors with prescribed fundamental group in $\cS$ were constructed in \cite{De10}. In Corollary \ref{cor.fund-group-II1} below, we also give a new and explicit proof of that result, using Theorem \ref{thm.A} and the main results of \cite{PV11,PV12}.

\begin{theoremletter}\label{thm.A}
Let $n \geq 2$ be an integer and $m \geq 3$ a prime number. Put $\Gamma = (\Z / n \Z) * (\Z / m\Z)$. For every outer action $(\al_g)_{g \in \Gamma}$ of $\Gamma$ on the hyperfinite II$_1$ factor, consider the associated group-type subfactor $S(\al) : R^{\Z/n\Z} \subset R \rtimes \Z/m\Z$. Note that these subfactors are irreducible, have index $nm$ and have a standard invariant that only depends on the integers $n$ and $m$.
\begin{enumerate}
\item For every group $H \in \cS$, there exists an outer action $(\al_g)_{g \in \Gamma}$ on the hyperfinite II$_1$ factor $R$ such that the subfactor $S(\al)$ has relative fundamental group $H$.
\item To every ergodic probability measure preserving automorphism $\Delta$ of a standard nonatomic probability space, we can associate an outer action $(\al^\Delta_g)_{g \in \Gamma}$ on $R$ such that the corresponding subfactors $S(\al^\Delta)$ are isomorphic if and only if the automorphisms are conjugate.
\end{enumerate}
\end{theoremletter}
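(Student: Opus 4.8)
We outline the strategy; everything is phrased in terms of the outer actions $\al$ of $\Gamma = (\Z/n\Z)*(\Z/m\Z)$ on $R$, and the translation to subfactors is carried out via \cite{BH95,BNP06}. Recall the two basic facts about group-type subfactors that we shall use: first, $\cF(S(\al))$ is contained in the fundamental group $\cF(\al)$ of the action; second, an isomorphism $S(\al)\cong S(\al')$ forces $\al$ and $\al'$ to be cocycle conjugate through an automorphism of $\Gamma$ stabilising the pair of free factors $\{\Z/n\Z,\Z/m\Z\}$ up to inner automorphisms. By Kurosh/Fouxe--Rabinovitch theory this automorphism group is small and explicit --- generated by $\Aut(\Z/n\Z)\times\Aut(\Z/m\Z)$, by partial conjugations, and (only when $n=m$) by the flip --- so its action on conjugacy classes of actions can be controlled. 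Conversely, cocycle conjugate actions give isomorphic subfactors. Thus part (1) reduces to producing, for each $H\in\cS$, an outer action $\al$ of $\Gamma$ on $R$ with $\cF(\al)=H$ (and with $\cF(S(\al))=\cF(\al)$ for our models), and part (2) reduces to producing a family $\al^\Delta$ of outer actions that are cocycle conjugate, up to the Kurosh ambiguity above, exactly when the $\Delta$'s are conjugate.

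For the models I would use \emph{diagonal Bernoulli actions}. Fix a diffuse hyperfinite base $(P_0,\tau_0)$ (non-abelian, e.g.\ $P_0=\M_2(\C)$, so that $\overline{\bigotimes}_{g\in\Gamma}(P_0,\tau_0)$ is the hyperfinite II$_1$ factor), fix a free ergodic p.m.p.\ action $\Gamma\actson(X,\mu)$ carrying the auxiliary data, and set $R=L^\infty(X)\ovt\overline{\bigotimes}_{g\in\Gamma}(P_0,\tau_0)$ with $\al$ the diagonal of the given action on $L^\infty(X)$ and the Bernoulli shift. The mixing Bernoulli factor makes $\al$ an outer, malleable action of $\Gamma$ on the hyperfinite II$_1$ factor, and $S(\al)$ is irreducible with index $nm$ and a standard invariant that depends only on the tensor category generated by $\Gamma$, hence only on $n,m$. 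For part (1), I would choose $\Gamma\actson(X,\mu)$ --- equivalently, its orbit equivalence relation together with the attached II$_\infty$ scaling datum --- to be exactly the ``core'' of \cite[Section~2]{PV08}, whose scaling group is arranged to equal $H$. For part (2), given an ergodic p.m.p.\ $\Delta\actson(Y,\nu)$, I would take $\Gamma\actson(X_\Delta,\mu_\Delta)$ to be an action manufactured from $\Delta$ --- e.g.\ feed $\Delta$ in along the canonical finite-index free normal subgroup of $\Gamma$, which does surject onto $\Z$, and induce up to $\Gamma$, additionally equipping the construction with an $\R$-valued orientation cocycle --- designed so that the orbit equivalence relation of $\al^\Delta$ carries a distinguished Cartan pair from which $(L^\infty(Y)\rtimes_\Delta\Z,\ L^\infty(Y))$, with orientation, can be reconstructed.

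The existence/computation direction in each part ($\cF(\al)\supseteq H$; and $\Delta$ conjugate to $\Delta'$ $\Rightarrow$ cocycle conjugacy $\Rightarrow S(\al^\Delta)\cong S(\al^{\Delta'})$) is checked by exhibiting the relevant isomorphisms. The substance is the converse, and here the key point is that $\Gamma$ is virtually free, hence word-hyperbolic, so the unique Cartan subalgebra technology of \cite{PV11,PV12} applies --- in the $B\rtimes\Gamma$/module form needed to handle the hyperfinite tensor factor and the passage to the canonical algebra (built from the iterated basic construction) naturally associated to $S(\al)$. For the fundamental group: an isomorphism of amplifications of $S(\al)$ descends, via unique Cartan, to a stable orbit equivalence of the core that scales the measure by $t$; since the core of \cite{PV08} has scaling group exactly $H$, this forces $t\in H$. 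For the $\Delta$-classification: from $S(\al^\Delta)\cong S(\al^{\Delta'})$ and the first paragraph we obtain a cocycle conjugacy of $\al^\Delta$ and $\al^{\Delta'}$ up to the Kurosh ambiguity; applying unique Cartan to the canonical algebra we conclude that the distinguished Cartan pairs of the two sides are isomorphic, hence that the $\Z$-actions $\Delta,\Delta'$ are orbit equivalent compatibly with the orientation cocycle, and a classical rigidity argument for single transformations (in the spirit of Belinskaya's theorem) upgrades this to conjugacy $\Delta\sim\Delta'$. Finally one checks that the Kurosh ambiguity does not merge non-conjugate $\Delta$'s, because $\Delta$ is planted in a part of $X_\Delta$ on which $\Aut(\Z/n\Z)\times\Aut(\Z/m\Z)$, the partial conjugations, and the flip act either trivially or by conjugating $\Delta$ by a measure-preserving transformation.

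The main obstacle is this converse in part (2): recovering $\Delta$ up to conjugacy from the isomorphism class of $S(\al^\Delta)$. Three ingredients must be made to cooperate --- a ``subfactor isomorphism $\Rightarrow$ cocycle conjugacy of the underlying $\Gamma$-actions'' statement sharp enough to locate the distinguished Cartan pair; genuine applicability of the \cite{PV11,PV12} unique-Cartan results to the twisted, only virtually free algebra in question; and the passage from orbit equivalence to honest conjugacy of $\Delta,\Delta'$, which --- $\Gamma$ having no property (T) --- cannot be obtained from Popa's cocycle superrigidity \cite{Po01a} and must instead be forced by building enough structure (the orientation cocycle, the asymmetric placement of $\Delta$) into $\al^\Delta$ so that an elementary ergodic-theoretic uniqueness argument suffices. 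Producing a single model $\al^\Delta$ that is simultaneously malleable and mixing (so that the \cite{PV08} computation and the unique-Cartan arguments run), outer on $R$ with the correct index and standard invariant, and rigid enough to remember $\Delta$ exactly, is where the real work lies.
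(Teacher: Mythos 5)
Your argument is a strategy outline whose decisive step is left open, and the mechanism you sketch for it would not work. After reducing (via \cite{BNP06}) to the $\Gamma$-actions, everything in part (2) hinges on recovering $\Delta$ \emph{up to conjugacy}; you propose to plant $\Delta$ in the orbit structure of a pmp $\Gamma$-action and recover it through Cartan/orbit-equivalence data, an unspecified ``orientation cocycle'', and a Belinskaya-type upgrade from orbit equivalence to conjugacy. By Dye's theorem all ergodic pmp transformations are orbit equivalent, so nothing that only remembers the orbit equivalence relation (plus a cocycle produced a posteriori from an abstract $*$-isomorphism) can separate non-conjugate $\Delta$'s, and Belinskaya-type theorems need integrability of the orbit-equivalence cocycle, which a von Neumann algebra isomorphism gives you no handle on. Moreover, the unique-Cartan theorems of \cite{PV11,PV12} do not apply to your model as stated: with the noncommutative Bernoulli factor $\ovt_{g\in\Gamma} M_2(\C)$ present, $L^\infty(X)$ is not a Cartan subalgebra of $R\rtimes\Gamma$ (it is not even maximal abelian in $R$), and in any case Theorem \ref{thm.A} is about isomorphism of the subfactors, which \cite[Theorem 3.2]{BNP06} converts into outer conjugacy of the $\Gamma$-actions on $R$, not into an isomorphism of $R\rtimes\Gamma$; in the paper \cite{PV12} enters only in the separate Corollary \ref{cor.fund-group-II1}.

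The paper's mechanism, which replaces your missing step, is different: the model is entirely ``abelian plus one $\Z$-action'', $N=L^\infty(X\times Z)\rtimes\Z$ with $X=Y^{\Gamma/\Gamma_0}$ coinduced from a rank-one mixing automorphism $T$ whose centralizer is $\{T^n\}$, with $T\not\cong T^{-1}$ and singular maximal spectral type, and with $\Gamma_0=\langle ab\rangle\cong\Z$ malnormal in $\Gamma$; the transformation $\Delta$ is stored in the $\Gamma$-fixed subalgebra $L^\infty(Z)\rtimes\Z$, not in the orbit structure of a $\Gamma$-action. Spectral gap coming from non-co-amenability of $\Gamma_0$, the malleable deformation of the coinduced action, the mixing intertwining lemmas, and Popa's Cartan conjugacy inside $L^\infty(Z)\rtimes\Z$ force any outer conjugacy to preserve $L^\infty(Z)\rtimes\Z$ and then $L^\infty(Z)$; the listed properties of $T$ then force the fiberwise automorphisms to be powers of $T$, force $\delta$ to fix $\Gamma_0$ pointwise, and force the resulting $\Z$-valued orbit-equivalence cocycle to be trivial after a unitary correction, i.e.\ $\theta\circ\Delta_1\circ\theta^{-1}=\Delta_2$. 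Part (1) is then obtained from Aaronson's theorem realizing every $H\in\cS$ as $\module(\Centr_{\Aut(Z,\eta)}(\Delta))$ for an ergodic infinite-measure-preserving $\Delta$, combined with the same rigidity theorem; no Baire-category core from \cite{PV08} and no Kurosh analysis of $\Aut(\Gamma)$ are needed, since the rigidity statement handles an arbitrary $\delta\in\Aut(\Gamma)$ directly.
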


As mentioned above, classifying group-type subfactors $R^H \subset R \rtimes K$ is closely related to classifying actions up to outer/cocycle conjugacy. Two outer actions $\al$ and $\beta$ of a countable group $\Gamma$ on a II$_1$ factor $M$ are called \emph{outer conjugate} if there exist automorphisms $\psi \in \Aut(M)$, $\delta \in \Aut(\Gamma)$ and a family of unitaries $(w_g)_{g \in \Gamma}$ in $M$ such that $\beta_{\delta(g)} \circ \psi = \psi \circ (\Ad w_g) \circ \al_g$ for all $g \in \Gamma$. If the unitaries $w_g$ can be chosen in such a way that $w_{gh} = w_g \, \al_g(w_h)$ for all $g,h \in \Gamma$, then $\al$ and $\beta$ are called \emph{cocycle conjugate}.

There are several parallels between the study of outer actions of $\Gamma$ on the hyperfinite II$_1$ factor $R$ up to cocycle conjugacy and the study of free ergodic probability measure preserving (pmp) actions $\Gamma \actson (X,\mu)$ up to orbit equivalence. Recall for instance that by \cite{OW79} all free ergodic pmp actions of an infinite amenable group $\Gamma$ are orbit equivalent, while it was shown in \cite{Oc85} that all outer actions of an amenable group $\Gamma$ on $R$ are cocycle conjugate.

Nonamenable groups $\Gamma$ admit uncountably many non orbit equivalent actions: this was first proven for the free groups $\F_n$ in \cite{GP03} and then for groups containing a copy of $\F_2$ in \cite{Io07}, and finally in the general case in \cite{Ep07}. This last result is based on \cite{GL07}, where it is shown that every nonamenable group $\Gamma$ contains $\F_2$ ``measurably''. In particular, there is no explicit construction of an uncountable family of non orbit equivalent actions of an arbitrary nonamenable group $\Gamma$, but rather a proof of their existence. Quite surprisingly, our Theorem \ref{thm.B} below provides an explicit and rather easy uncountable family of non outer conjugate actions of an arbitrary nonamenable group $\Gamma$ on the hyperfinite II$_1$ factor. Note here that it was already proven in \cite{Jo82b} that every nonamenable group $\Gamma$ admits at least two non outer conjugate actions on the hyperfinite II$_1$ factor, while it was shown in \cite{Po01a} that every $w$-rigid\footnote{A countable group $\Gamma$ is called $w$-rigid if $\Gamma$ admits an infinite normal subgroup with the relative property (T) of Kazhdan-Margulis.} group $\Gamma$ admits uncountably many non outer conjugate actions.

\begin{theoremletter}\label{thm.B}
Let $\Gamma$ be any nonamenable group and let $\Lambda$ be any amenable group that has a torsion free FC-radical\footnote{The FC-radical of a countable group $\Gamma$ is the normal subgroup that consists of all elements of $\Gamma$ that have a finite conjugacy class.}, e.g.\ an amenable icc group, or an amenable torsion free group. Realize the hyperfinite II$_1$ factor $R$ as
$$R = (M_2(\C)^{\Gamma \times \Lambda} \ovt M_2(\C)^\Lambda) \rtimes \Lambda$$
where $\Lambda$ acts diagonally by Bernoulli shifts and where we take infinite tensor products with respect to the trace on $M_2(\C)$. The Bernoulli shift of $\Gamma$ yields an outer action $(\al^\Lambda_g)_{g \in \Gamma}$ of $\Gamma$ on $R$.

The actions $(\al^{\Lambda_1}_g)_{g \in \Gamma}$ and $(\al^{\Lambda_2}_g)_{g \in \Gamma}$ are outer conjugate if and only if the groups $\Lambda_1,\Lambda_2$ are isomorphic.
\end{theoremletter}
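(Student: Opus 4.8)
The ``if'' direction is immediate: any isomorphism $\Lambda_1 \cong \Lambda_2$ transports the entire construction, so $\alpha^{\Lambda_1}$ and $\alpha^{\Lambda_2}$ are conjugate, hence outer conjugate. For the converse my plan is to pass to crossed products and feed everything into Popa's cocycle superrigidity theorem. Write $P_\Lambda = M_2(\C)^{\Gamma \times \Lambda} \ovt M_2(\C)^\Lambda$, so that $R = P_\Lambda \rtimes \Lambda$; since the Bernoulli action of $\Gamma$ commutes with the diagonal Bernoulli action of $\Lambda$, one has
\[
N_\Lambda \;:=\; R \rtimes_{\alpha^\Lambda} \Gamma \;=\; P_\Lambda \rtimes (\Gamma \times \Lambda),
\]
where $\Gamma \times \Lambda$ acts by a (noncommutative, i.e.\ Connes-St{\o}rmer) generalized Bernoulli action on the index set $J_\Lambda = (\Gamma \times \Lambda) \sqcup \Lambda$ --- the first orbit free and regular, the second with stabiliser $\Gamma \times \{e\}$. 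Note that $R = P_\Lambda \rtimes \Lambda$ sits inside $N_\Lambda$ as the amenable regular subalgebra attached to $\Lambda \leq \Gamma \times \Lambda$, and that an outer conjugacy between $\alpha^{\Lambda_1}$ and $\alpha^{\Lambda_2}$ produces an isomorphism $\Psi \colon N_{\Lambda_1} \to N_{\Lambda_2}$ with $\Psi(R) = R$. (A small technical point: the unitary family in the definition of outer conjugacy is a priori only a projective $1$-cocycle; one circumvents this by working throughout with the inclusion $R \subset N_\Lambda$, or by a harmless stabilisation.)

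First I would check that $\Gamma \times \Lambda_i$ is covered by Popa's cocycle superrigidity theorem: $\Gamma$ is nonamenable and $\Lambda_i$ is infinite, the case of a finite $\Lambda_i$ --- then trivial, since a finite group with torsion-free FC-radical is itself trivial --- being disposed of separately. Hence the Connes-St{\o}rmer Bernoulli action of $\Gamma \times \Lambda_i$ on $M_2(\C)^{\Gamma \times \Lambda_i}$, being malleable, weakly mixing and with good spectral gap, is $\Ufin$-cocycle superrigid. The heart of the argument is then to use this, together with the structural constraint $\Psi(R) = R$ and the amenability of $R$, to show that $\Psi$ is, up to an inner perturbation, built from an isomorphism of the base systems: there is a group isomorphism $\delta \colon \Gamma \times \Lambda_1 \to \Gamma \times \Lambda_2$ and an isomorphism of the $P_{\Lambda_i}$'s intertwining the $\Gamma \times \Lambda_i$-actions via $\delta$. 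Concretely, one locates the canonical Cartan subalgebra $A_\Lambda \subset P_\Lambda$ (the infinite tensor power of the diagonal of $M_2(\C)$), which is Cartan in both $R$ and $N_\Lambda$ because the Bernoulli-type actions are essentially free; since the Cartan subalgebra of the hyperfinite factor $R$ is unique up to conjugacy, one arranges $\Psi(A_{\Lambda_1}) = A_{\Lambda_2}$ on top of $\Psi(R) = R$ (absorbing the adjustment into an inner automorphism of $N_{\Lambda_2}$), whence $\Psi$ induces an orbit equivalence of the generalized Bernoulli actions $\Gamma \times \Lambda_i \actson \{0,1\}^{J_{\Lambda_i}}$, and cocycle superrigidity upgrades this orbit equivalence to a $\delta$-conjugacy.

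To conclude, I would read off $\Lambda_i$ from the $\delta$-conjugacy. A conjugacy between generalized Bernoulli actions with the same (two-point) base forces $\delta$ to identify the underlying $\Gamma \times \Lambda_i$-sets $J_{\Lambda_i}$; since the only non-free orbit of $J_{\Lambda_i}$ is $(\Gamma \times \Lambda_i)/(\Gamma \times \{e\})$, it must be mapped onto the corresponding one, so $\delta$ carries $\Gamma \times \{e\}$ onto $\Gamma \times \{e\}$ (all conjugates of this normal direct factor coinciding with it). Therefore $\Lambda_1 = (\Gamma \times \Lambda_1)/(\Gamma \times \{e\}) \cong (\Gamma \times \Lambda_2)/(\Gamma \times \{e\}) = \Lambda_2$. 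This also explains the two ingredients of the construction: the tensor factor $M_2(\C)^{\Gamma \times \Lambda}$ supplies the cocycle superrigid engine, while the extra factor $M_2(\C)^\Lambda$ --- responsible for the non-free orbit, equivalently recorded intrinsically as $R^{\alpha^\Lambda(\Gamma)} = M_2(\C)^\Lambda \rtimes \Lambda$ --- is what makes $\Lambda$, rather than merely the isomorphism class of $\Gamma \times \Lambda$, visible.

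The main obstacle is the middle step: turning a mere isomorphism of the crossed products into a conjugacy of the base actions. This requires controlling the image under $\Psi$ of the Cartan subalgebra (equivalently of the noncommutative Bernoulli base $M_2(\C)^{\Gamma \times \Lambda_i}$) while respecting $R$, i.e.\ reconciling the two crossed-product decompositions $N_{\Lambda_i} = P_{\Lambda_i} \rtimes (\Gamma \times \Lambda_i)$; this is where the amenability of $R = P_\Lambda \rtimes \Lambda$, the uniqueness of the Cartan subalgebra of the hyperfinite factor, and the spectral-gap/cocycle-superrigidity of the $\Gamma \times \Lambda_i$-Bernoulli action all come together, and where the projective-cocycle subtlety noted above must be handled with care.
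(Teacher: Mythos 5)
Your reduction to the crossed product $N_\Lambda = P_\Lambda \rtimes (\Gamma\times\Lambda)$ is a genuinely different strategy from the paper's. The paper deliberately avoids cocycle superrigidity (the authors point out that for an arbitrary nonamenable $\Gamma$, e.g.\ a free group, no such theorem is available) and instead works directly with the projective cocycle: Lemma \ref{lem2} runs the malleable deformation against the spectral gap of the representation $\rho_g(\xi)=V_g\,\al_g(\xi)\,V_g^*$ (which makes sense even though $(V_g)$ is only a cocycle up to scalars) to locate the $\Gamma$-fixed subalgebra $P_i=M_2(\C)^{\Lambda_i}\rtimes\Lambda_i$, and then uses mixingness (Lemma \ref{lem.mixing}) together with \cite[Lemma 8.4]{IPP05} to align $M_2(\C)^{\Lambda_1}$ with $M_2(\C)^{\Lambda_2}$. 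Your idea of restoring superrigidity by enlarging the acting group to $\Gamma\times\Lambda$ is appealing, but as written it has gaps I do not see how to close for a general nonamenable $\Gamma$.

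The decisive gap is the Cartan step. Connes--Feldman--Weiss uniqueness gives that $\Psi(A_{\Lambda_1})$ and $A_{\Lambda_2}$, as Cartan subalgebras of the hyperfinite factor $R$, are conjugate by an \emph{automorphism} of $R$; it does not give conjugacy by a unitary of $R$ (Cartan subalgebras of $R$ are not unique up to unitary conjugacy), and a fortiori the adjustment cannot be ``absorbed into an inner automorphism of $N_{\Lambda_2}$''. What your argument actually requires is unitary conjugacy of the two Cartan subalgebras inside the nonamenable factor $N_{\Lambda_2}$ --- precisely the ``unique Cartan decomposition'' problem for Bernoulli-type crossed products, which is known only for special groups (free and hyperbolic groups, via \cite{PV11,PV12}; this is exactly why Corollary \ref{cor.fund-group-II1} of the paper restricts to such $\Gamma$) and is open for arbitrary nonamenable $\Gamma$. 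Two further problems: (i) the generalized Bernoulli action of $\Gamma\times\Lambda$ on the index set $(\Gamma\times\Lambda)\sqcup\Lambda$ does \emph{not} have spectral gap for the $\Gamma$-part of the malleable deformation, since $\Gamma$ acts trivially on the $M_2(\C)^\Lambda$ tensor factor; Popa's theorem covers only the plain Bernoulli factor $M_2(\C)^{\Gamma\times\Lambda}$, and you never bridge from an orbit equivalence of the full action to a conjugacy. (ii) The scalar $2$-cocycle produced by the outer conjugacy means $\Psi$ a priori lands in a \emph{twisted} crossed product; ``harmless stabilisation'' does not obviously restore the untwisted picture compatibly with $\Psi(R)=R$. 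The paper's route sidesteps all three issues.
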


We prove Theorems \ref{thm.A} and \ref{thm.B} by using Popa's deformation/rigidity methods, in particular the spectral gap rigidity of \cite{Po06} and the malleable deformation for Bernoulli actions of \cite{Po01a,Po03}.

Our methods can best be explained for the very explicit actions $(\al^\Lambda_g)_{g \in \Gamma}$ of the arbitrary nonamenable group $\Gamma$ on the hyperfinite II$_1$ factor
$$M(\Lambda)= (M_2(\C)^{\Gamma \times \Lambda} \ovt M_2(\C)^\Lambda) \rtimes \Lambda$$
as defined in Theorem \ref{thm.B}. Contrary to the approach in \cite{Po01a}, we cannot expect to prove a general cocycle superrigidity theorem for $(\al^\Lambda_g)_{g \in \Gamma}$, because $\Gamma$ might be the free group, or a free product group, and such groups do not have cocycle superrigid actions.

So we need to use another method to prove that every outer conjugacy $\psi : M(\Lambda_1) \recht M(\Lambda_2)$ between $(\al^{\Lambda_1}_g)_{g \in \Gamma}$ and $(\al^{\Lambda_2}_g)_{g \in \Gamma}$ is actually a conjugacy up to an inner automorphism. For this, note that by construction, the subalgebra $P_i = M_2(\C)^{\Lambda_i} \rtimes \Lambda_i$ of $M(\Lambda_i)$ is pointwise fixed under the action $(\al^{\Lambda_i}_g)_{g \in \Gamma}$. Using the methods of \cite{Po01a,Po03,Po06}, including a malleable deformation of $M(\Lambda_2)$ and spectral gap rigidity coming from the nonamenability of $\Gamma$, we deduce that the deformation converges uniformly to the identity on the unit ball of $\psi(P_1)$ and find a unitary $w \in M(\Lambda_2)$ such that $w \psi(P_1) w^* = P_2$. So replacing $\psi$ by $(\Ad w) \circ \psi$, the $*$-isomorphism $\psi$ becomes a conjugacy of $(\al^{\Lambda_1}_g)_{g \in \Gamma}$ and $(\al^{\Lambda_2}_g)_{g \in \Gamma}$. Using the mixing techniques of \cite[Section 3]{Po03}, we then finally deduce that $\psi$ must send $M_2(\C)^{\Lambda_1}$ onto $M_2(\C)^{\Lambda_2}$. Since also $\psi(P_1) = P_2$, the actions $\Lambda_i \actson M_2(\C)^{\Lambda_i}$ follow cocycle conjugate and, in particular, $\Lambda_1 \cong \Lambda_2$.

{\bf Acknowledgment.} We are very grateful to Darren Creutz and Cesar E. Silva for their advice on rank one ergodic transformations (see \ref{assum} below).

\section{Preliminaries} \label{sec.prelim}

We start by recalling Popa's theory of \emph{intertwining-by-bimodules} developed in \cite[Section~2]{Po03}. Let $(M,\tau)$ be a von Neumann algebra with separable predual equipped with a normal faithful tracial state. Let $P,Q \subset M$ be von Neumann subalgebras. Following \cite{Po03}, write $P \prec_M Q$ if there exist projections $p \in P$, $q \in Q$, a normal unital $*$-homomorphism $\theta : pPp \recht qQq$ and a nonzero partial isometry $v \in pMq$ satisfying $a v = v \theta(a)$ for all $a \in P$. By \cite[Corollary~2.3]{Po03}, we have $P \not\prec_M Q$ if and only if there exists a sequence of unitaries $v_n \in \cU(P)$ satisfying $\lim_n \|E_Q(a v_n b)\|_2 = 0$ for all $a,b \in M$.

Also recall that a trace preserving action $(\gamma_s)_{s \in \Lambda}$ of a countable group $\Lambda$ on a von Neumann algebra $(B,\tau)$ with normal faithful tracial state $\tau$ is called \emph{mixing} if for all $a,b \in B$ with $\tau(a) = 0 = \tau(b)$, we have $\lim_{s \recht \infty} \tau(\gamma_s(a)b) = 0$.

Our first lemma provides a variant of the results in \cite[Section 3]{Po03}. For completeness, we provide a complete proof.

\begin{lemma} \label{lem.mixing}
Let $(B,\tau)$ and $(D,\Tr)$ be von Neumann algebras equipped with normal faithful traces with $\tau(1) = 1$ and with $\Tr$ being finite or semifinite. Assume that a countable group $\Lambda$ acts in a trace preserving way on $(B,\tau)$ and $(D,\Tr)$. Denote these actions, as well as their diagonal product on $B \ovt D$, by $(\gamma_s)_{s \in \Lambda}$. Assume that the action $\Lambda \actson (B,\tau)$ is mixing.

Let $p \in D$ be a projection with $\Tr(p) < \infty$. Denote $M = p((B \ovt D) \rtimes \Lambda) p$ and $P = p(D \rtimes \Lambda)p$. If $Q \subset P$ is a von Neumann subalgebra with $Q \not\prec_P pDp$ and if $v \in M$ satisfies $v Q \subset P v$, then $v \in P$.
\end{lemma}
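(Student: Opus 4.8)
The plan is to run a Fourier/mixing argument in the spirit of \cite[Section~3]{Po03}. Since $\Tr(p) < \infty$, both $M = p((B\ovt D)\rtimes\Lambda)p$ and $P = p(D\rtimes\Lambda)p$ are finite von Neumann algebras, so after normalizing the trace we have trace preserving conditional expectations $E_P : M \recht P$ and $E_{pDp} : P \recht pDp$. Decompose $B = \C 1 \oplus (B\ominus\C1)$ and identify $\cH := L^2(M)\ominus L^2(P)$ with $p\bigl(\bigoplus_{s\in\Lambda} L^2((B\ominus\C1)\ovt D)\,u_s\bigr)p$; this is a $P$-$P$-subbimodule of $L^2(M)$, a right $P$-module and a left $P$-module. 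Write $v = v_0 + v_1$ with $v_0 := E_P(v) \in P$ and $v_1 := v - v_0 \in \cH\cap M$. The whole point is to show $v_1 = 0$; then $v = v_0 \in P$.

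Step 1 (localizing the intertwiner). Let $l_P\in P$ be the support projection of $E_P(vv^*)$; for $z\in P$ one has $zv = 0 \iff zl_P = 0$. For $b\in\cU(Q)$ the hypothesis gives $vb\in Pv$, so there is a unique $c_b \in P$ with $c_b v = vb$ and $c_b = l_P c_b l_P$ (take any $c_0$ with $c_0 v = vb$ and replace it by $l_P c_0 l_P$). One checks that $b\mapsto c_b$ is multiplicative on $\cU(Q)$ with $c_p = l_P$, hence $c_b c_{b^*} = c_{b^*}c_b = l_P$, i.e.\ $c_b$ is a unitary in $l_P P l_P$. Splitting the identity $vb = c_b v$ into its $L^2(P)$- and $\cH$-components (legitimate because $\cH$ is a $P$-$P$-bimodule) yields $v_1 b = c_b v_1$ for all $b\in\cU(Q)$; applying $1-l_P$ on the left also gives $(1-l_P)v_1 = 0$, so $l_P v_1 = v_1$.

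Step 2 (relative mixing; the technical core). Since $Q\not\prec_P pDp$, by \cite[Corollary~2.3]{Po03} there is a sequence $u_n\in\cU(Q)$ with $\|E_{pDp}(xu_ny)\|_2\recht 0$ for all $x,y\in P$; in the Fourier picture this says that each Fourier coefficient of $u_n$ tends to $0$ in $\|\cdot\|_2$. The claim to prove is: for all $\xi,\eta\in\cH\cap M$ one has $\|E_P(\xi u_n\eta)\|_2\recht 0$. This is where the mixing of $\Lambda\actson(B,\tau)$ is used: expanding $\xi, u_n, \eta$ in Fourier series over $\Lambda$ and the $(B\ominus\C1)\ovt D$-coefficients over a finite-rank approximation, the $(B\ominus\C1)$-parts of $\xi$ and $\eta$ get decorrelated by the $\gamma_s$'s once the group index $s$ leaves a finite set, while for the finitely many remaining indices the corresponding Fourier coefficients of $u_n$ vanish; an $\eps/3$-estimate on the $\ell^2$-sum over $\Lambda$, using $\sum_k\|(u_n)_k\|_2^2 = \|u_n\|_2^2$ to control the tail, then gives the claim. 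I expect this estimate — making rigorous the interplay between the mixing of $\Lambda\actson B$, the smallness of the Fourier coefficients of $u_n$, and the bookkeeping of the $D$-components and the corner projection $p$ — to be the main obstacle; everything else is formal.

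Step 3 (conclusion). Apply Step 2 with $\xi = v_1$ and $\eta = v_1^*$: $\|E_P(v_1 u_n v_1^*)\|_2\recht 0$. On the other hand, by Step 1, $E_P(v_1 u_n v_1^*) = E_P(c_{u_n}v_1 v_1^*) = c_{u_n}\,g$ with $g := E_P(v_1 v_1^*)\geq 0$. Since $g\leq E_P(vv^*)$ we have $l_P g = g$, and since $c_{u_n}^* c_{u_n} = l_P$ this gives $g = c_{u_n}^*(c_{u_n}g) = c_{u_n}^* E_P(v_1 u_n v_1^*)$, whence $\|g\|_2 \leq \|c_{u_n}\|\,\|E_P(v_1 u_n v_1^*)\|_2 \leq \|E_P(v_1 u_n v_1^*)\|_2 \recht 0$. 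Therefore $g = 0$, so $\|v_1\|_2^2 = \tau(v_1 v_1^*) = \tau(g) = 0$, i.e.\ $v_1 = 0$ and $v\in P$.
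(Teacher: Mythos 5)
Your proof is correct and follows essentially the same route as the paper: your Step~2 is precisely the paper's key estimate \eqref{eq.right-mixing}, and your Steps~1 and~3 give a self-contained proof of the intertwining principle that the paper instead outsources to \cite[Lemma D.3]{Va06} (the corner bookkeeping with the support projection $l_P$ of $E_P(vv^*)$, the unitaries $c_b\in l_PPl_P$, and the inequality $E_P(v_1v_1^*)\leq E_P(vv^*)$ all check out). The only part of your Step~2 sketch that is not purely formal is the claim that each fixed Fourier coefficient $(u_n)_s\in pD\gamma_s(p)$ tends to $0$ in $\|\cdot\|_2$: testing against elements $pu_s^*dp$ with $d\in\cU(D)$ only yields $\|(u_n)_s\,dp\|_2\recht 0$, and one must then take the join of the projections $dpd^*$ over $d\in\cU(D)$, which equals the central support $z$ of $p$ in $D$ and satisfies $(u_n)_s z=(u_n)_s$; this is exactly how the paper handles it. With that filled in, the $\eps$-argument you outline (mixing makes $|\tau(x\gamma_s(y))|$ uniformly small outside a finite subset of $\Lambda$, while $\sum_s\|(u_n)_s\|_2^2=\Tr(p)$ controls the tail) is the paper's computation verbatim.
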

\begin{proof}
By assumption, we get a sequence $w_n \in \cU(Q)$ satisfying $\lim_n \|E_{pDp}(x w_n y)\|_2 = 0$ for all $x,y \in P$. Denote by $(u_s)_{s \in \Lambda}$ the canonical unitaries in the crossed product $D \rtimes \Lambda$. Every $w_n$ has a Fourier decomposition
$$w_n = \sum_{s \in \Lambda} (w_n)_s u_s \quad\text{with}\;\; (w_n)_s \in p D \gamma_s(p) \; .$$
We claim that for every fixed $s \in \Lambda$, we have $\lim_n \|(w_n)_s\|_2 = 0$. To prove this claim, fix $s \in \Lambda$. For every unitary $v \in \cU(D)$, we have the element $p u_s^* v p \in P$ and therefore
$$\lim_n \|E_{pDp}(w_n \, p u_s^* v p)\|_2 = 0 \; .$$
Since $E_{pDp}(w_n \, p u_s^* v p) = (w_n)_s \gamma_s(p) v p = (w_n)_s v p$, we get that $\lim_n \|(w_n)_s v p\|_2 = 0$. Then also $\lim_n \|(w_n)_s \, vpv^*\|_2 = 0$. Since the join of all the projections $vpv^*$, $v \in \cU(D)$, equals the central support $z \in \cZ(D)$ of $p \in D$, we get that $\lim_n \|(w_n)_s \, z\|_2 = 0$. But $(w_n)_s \, z = z \, (w_n)_s = (w_n)_s$ and the claim is proven.

We next prove that
\begin{equation}\label{eq.right-mixing}
\lim_n \|E_P(x w_n y)\|_2 = 0 \quad\text{for all}\;\; x,y \in M \ominus P \; .
\end{equation}
Since the linear span of $P \, (B \ominus \C 1)$ is $\|\,\cdot\,\|_2$-dense in $M \ominus P$, it suffices to prove \eqref{eq.right-mixing} for $x,y \in B \ominus \C 1$. But then
$$E_P(x w_n y) = \sum_{s \in \Lambda} \tau(x \gamma_s(y)) \, (w_n)_s \, u_s \; .$$
It follows that
$$\|E_P(x w_n y)\|_2^2 = \sum_{s \in \Lambda} |\tau(x \gamma_s(y))|^2 \, \|(w_n)_s\|_2^2 \; .$$
Fix $\eps > 0$. Since the action $\Lambda \actson (B,\tau)$ is mixing, take a finite subset $\cF \subset \Lambda$ such that $|\tau(x \gamma_s(y))|^2  < \eps / \Tr(p)$ for every $s \in \Lambda- \cF$. Since $\lim_n \|(w_n)_s\|_2 = 0$ for every fixed $s \in \Lambda$, we next take $n_0$ such that
$$\sum_{s \in \cF} |\tau(x \gamma_s(y))|^2 \, \|(w_n)_s\|_2^2 < \eps \quad\text{for all $n \geq n_0$.}$$
We conclude that for all $n \geq n_0$,
\begin{align*}
\|E_P(x w_n y)\|_2^2 & \leq \eps + \sum_{s \in \Lambda - \cF} |\tau(x \gamma_s(y))|^2 \, \|(w_n)_s\|_2^2 \\ & \leq \eps + \frac{\eps}{\Tr(p)} \sum_{s \in \Lambda-\cF} \|(w_n)\|_2^2 \leq \eps + \frac{\eps}{\Tr(p)} \, \|w_n\|_2^2 = 2 \eps \; .
\end{align*}
So \eqref{eq.right-mixing} is proven. The conclusion of the lemma now follows from \cite[Lemma D.3]{Va06}.
\end{proof}

Let $(M,\tau)$ be a tracial von Neumann algebra. Recall from \cite[Section 1.2]{Po81} that von Neumann subalgebras $M_1, M_2 \subset M$ are said to form a \emph{commuting square} when $E_{M_1} \circ E_{M_2} = E_{M_1 \cap M_2} = E_{M_2} \circ E_{M_1}$. We need the following easy lemma and include a complete proof for the convenience of the reader.

\begin{lemma}\label{lem.commuting}
Let $(M,\tau)$ be a tracial von Neumann algebra with von Neumann subalgebras $M_1, M_2 \subset M$ that form a commuting square. Assume that the linear span of $M_1 M_2$ is $\|\,\cdot\,\|_2$-dense in $M$. If $Q \subset M_1$ is a von Neumann subalgebra and $Q \not\prec_{M_1} M_1 \cap M_2$, then $Q \not\prec_M M_2$.
\end{lemma}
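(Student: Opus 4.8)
The plan is to prove the contrapositive via the sequential characterization of intertwining from \cite[Corollary 2.3]{Po03}. Assume $Q \not\prec_{M_1} M_1 \cap M_2$. Then there exists a sequence of unitaries $v_n \in \cU(Q)$ with $\lim_n \|E_{M_1 \cap M_2}(a v_n b)\|_2 = 0$ for all $a,b \in M_1$. I want to upgrade this to $\lim_n \|E_{M_2}(x v_n y)\|_2 = 0$ for all $x,y \in M$, which by the same corollary gives $Q \not\prec_M M_2$.

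First I would reduce to the case $x,y \in M_1 M_2$ by $\|\,\cdot\,\|_2$-density: since $v_n$ are unitaries, the maps $\xi \mapsto E_{M_2}(x v_n \xi)$ and $\xi \mapsto E_{M_2}(\xi v_n y)$ are uniformly bounded in $n$, so it suffices to check the vanishing on a $\|\,\cdot\,\|_2$-dense set; by hypothesis $M_1 M_2$ spans such a set. So write $x = a_1 b_1$ and $y = a_2 b_2$ with $a_i \in M_1$, $b_i \in M_2$. Then $E_{M_2}(a_1 b_1 v_n a_2 b_2) = b_1 \, E_{M_2}(a_1 v_n a_2) \, b_2$ using the $M_2$-bimodularity of $E_{M_2}$, so it is enough to prove $\lim_n \|E_{M_2}(a_1 v_n a_2)\|_2 = 0$ for all $a_1, a_2 \in M_1$. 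But $a_1 v_n a_2 \in M_1$, so the commuting square condition $E_{M_2}|_{M_1} = E_{M_1 \cap M_2}$ gives $E_{M_2}(a_1 v_n a_2) = E_{M_1 \cap M_2}(a_1 v_n a_2)$, whose norm tends to $0$ by the choice of the $v_n$. This closes the argument.

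There is no serious obstacle here; the only point requiring a little care is the density reduction, namely checking that the estimate $\lim_n \|E_{M_2}(x v_n y)\|_2 = 0$ is stable under approximating $x,y$ in $\|\,\cdot\,\|_2$ uniformly in $n$, which follows since $\|E_{M_2}(x v_n y) - E_{M_2}(x' v_n y')\|_2 \leq \|x - x'\|_2 \|y\| + \|x'\| \|y - y'\|_2$ once one of the two arguments is taken in $M$ (bounded) rather than merely $L^2$ — so the reduction should first be done with $x$ in a dense $\|\,\cdot\,\|_2$-subset and $y$ bounded, then symmetrically. Everything else is a one-line manipulation with conditional expectations and the definition of a commuting square.
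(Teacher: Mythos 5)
Your overall strategy is the same as the paper's (push the sequence of unitaries witnessing $Q \not\prec_{M_1} M_1 \cap M_2$ through $E_{M_2}$, use the commuting square on $M_1$, and conclude by $\|\,\cdot\,\|_2$-density), and your density reduction at the end is handled correctly. However, the key identity as you wrote it is false: from $x = a_1 b_1$ with $a_1 \in M_1$, $b_1 \in M_2$ you claim
$$E_{M_2}(a_1 b_1 v_n a_2 b_2) = b_1 \, E_{M_2}(a_1 v_n a_2) \, b_2 \; .$$
Bimodularity of $E_{M_2}$ only lets you pull $M_2$-elements out of the \emph{outer} positions of the product; here $b_1$ sits between $a_1$ and $v_n$, and since $a_1$ and $b_1$ need not commute there is no way to move $b_1$ to the left past $a_1$. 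Moreover $a_1 b_1 v_n a_2$ does not in general lie in $M_1$, so the commuting square cannot be applied to it either. So this step, as written, would fail.

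The repair is small and lands you exactly on the paper's computation: decompose the \emph{left} factor with its $M_2$-part on the outside, i.e.\ approximate the left argument by the linear span of $M_2 M_1$ and the right argument by the linear span of $M_1 M_2$. The span of $M_2 M_1$ is also $\|\,\cdot\,\|_2$-dense, because taking adjoints is a $\|\,\cdot\,\|_2$-isometry and $(M_1 M_2)^* = M_2 M_1$. Then for $a,b \in M_2$ and $x,y \in M_1$ one has $E_{M_2}(a x \, v_n \, y b) = a \, E_{M_2}(x v_n y) \, b = a \, E_{M_1 \cap M_2}(x v_n y) \, b$, where now the commuting square applies because $x v_n y \in M_1$, and this tends to $0$ in $\|\,\cdot\,\|_2$ by the choice of the $v_n$. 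With this corrected sandwiching, your density argument finishes the proof exactly as in the paper.
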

\begin{proof}
Put $P = M_1 \cap M_2$. Assume that $Q \subset M_1$ and $Q \not\prec_{M_1} P$. We then find a sequence of unitaries $w_n \in \cU(Q)$ satisfying $\lim_n \|E_P(x w_n y)\|_2 = 0$ for all $x,y \in M_1$. For all $x,y \in M_1$ and for all $a,b \in M_2$, we have
$$E_{M_2}(ax \, w_n \, yb) = a \, E_{M_2}(x w_n y) \, b = a \, E_P(x w_n y) \, b \; .$$
Therefore, $\lim_n \|E_{M_2}(ax \, w_n \, y b)\|_2 = 0$. Since the linear span of $M_1 M_2$ is $\|\,\cdot\,\|_2$-dense in $M$, we conclude that $\lim_n \|E_{M_2}(c w_n d)\|_2 = 0$ for all $c,d \in M$. This implies that $Q \not\prec_M M_2$.
\end{proof}

We finally recall the concept of a \emph{co-induced action.} Assume that $(B,\tau)$ is a tracial von Neumann algebra with a trace preserving action $(\beta_g)_{g \in \Gamma_0}$. Assume that $\Gamma_0 < \Gamma$. The co-induced action of $(\beta_g)_{g \in \Gamma_0}$ to $\Gamma$ is the following action $(\al_g)_{g \in \Gamma}$ on the infinite tensor product $(A,\tau) = (B,\tau)^{\Gamma/\Gamma_0}$. First choose a section $\theta : \Gamma / \Gamma_0 \recht \Gamma$ with $\theta(e\Gamma_0) = e$. We then get the $1$-cocycle $\om : \Gamma \times \Gamma/\Gamma_0 \recht \Gamma_0$ determined by
$$g \, \theta(h \Gamma_0) = \theta(gh \Gamma_0) \, \om(g,h\Gamma_0) \quad\text{for all}\;\; g \in \Gamma \; , \; h\Gamma_0 \in \Gamma/\Gamma_0 \; .$$
We denote by $\pi_{h \Gamma_0} : B \recht A$ the embedding of $B$ as the $h\Gamma_0$-th tensor factor. There is a unique trace preserving action $(\al_g)_{g \in \Gamma}$ of $\Gamma$ on $A$ satisfying
$$\al_g(\pi_{h\Gamma_0}(b)) = \pi_{gh \Gamma_0}(\beta_{\om(g,h\Gamma_0)}(b)) \quad\text{for all}\;\; g \in \Gamma \; , \; h\Gamma_0 \in \Gamma/\Gamma_0 \; , \; b \in B \; .$$
Note that by construction $\al_g(\pi_{e\Gamma_0}(b)) = \pi_{e\Gamma_0}(\beta_g(b))$ for all $g \in \Gamma_0$, $b \in B$.

\section{A first outer conjugacy lemma and the proof of Theorem \ref{thm.B}}\label{sec.outer-conj}

Throughout this section, we fix a countable group $\Gamma$ with a subgroup $\Gamma_0 < \Gamma$ that is \emph{not co-amenable,} i.e.\ such that the set $\Gamma/\Gamma_0$ does not admit a $\Gamma$-invariant mean. In particular, one can take $\Gamma_0=\{e\}$, or $\Gamma_0$ amenable, and $\Gamma$ any nonamenable group. We also fix an infinite group $\Lambda$. We let these groups $\Gamma$ and $\Lambda$ act in the following way on von Neumann algebras.
\begin{itemize}
\item Let $(B,\tau)$ be a von Neumann algebra equipped with a normal faithful tracial state $\tau$. We assume that $(B,\tau)$ comes with commuting trace preserving faithful\footnote{We call a group action \emph{faithful} if no non trivial group element acts by the identity automorphism.} actions $(\beta_g)_{g \in \Gamma_0}$ and $(\gamma_s)_{s \in \Lambda}$. We assume that the action $(\gamma_s)_{s \in \Lambda}$ on $(B,\tau)$ is \emph{mixing.}
\item We put $(A,\tau) = (B,\tau)^{\Gamma/\Gamma_0}$. As recalled at the end of Section \ref{sec.prelim}, we can define the co-induced action of $(\beta_g)_{g \in \Gamma_0}$ and this is an action $(\al_g)_{g \in \Gamma}$ of $\Gamma$ on $(A,\tau)$. We also consider the diagonal action of $\Lambda$ on $(A,\tau)$ that we still denote as $(\gamma_s)_{s \in \Lambda}$. Note that $(\al_g)_{g \in \Gamma}$ commutes with $(\gamma_s)_{s \in \Lambda}$.
\item Let $(D,\Tr)$ be a von Neumann algebra equipped with a normal, finite or semifinite, faithful trace $\Tr$. Assume that $(\gamma_s)_{s \in \Lambda}$ is a trace preserving action on $(D,\Tr)$ and that one of the following assumptions hold.
    \begin{enumerate}
    \item $D$ is a factor, the action $(\gamma_s)_{s \in \Lambda}$ is outer and the group $\Lambda$ has a torsion free FC-radical, e.g.\ an icc group, or a torsion free group.
    \item $D$ is diffuse abelian and the action $(\gamma_s)_{s \in \Lambda}$ is essentially free and ergodic.
    \end{enumerate}
We consider the diagonal action of $\Lambda$ on $A \ovt D$ and continue to denote all these actions of $\Lambda$ by $(\gamma_s)_{s \in \Lambda}$.
\end{itemize}
These data yield the crossed product von Neumann algebra $N = (A \ovt D) \rtimes \Lambda$ equipped with the trace $\Tr$ induced by $\tau$ and $\Tr$. The action $(\al_g)_{g \in \Gamma}$ of $\Gamma$ on $A$ extends to an action on $N$ that equals the identity on $D \rtimes \Lambda$ and that we still denote as $(\al_g)_{g \in \Gamma}$. We start by proving a few basic properties.

\begin{lemma}\label{lem1}
The von Neumann algebra $N$ is a factor. We have $N \cap (D \rtimes \Lambda)' = \C 1$. The action $(\al_g)_{g \in \Gamma}$ of $\Gamma$ on $N$ is outer.
\end{lemma}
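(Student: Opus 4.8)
The plan is to establish the three assertions in the order: (a) the relative commutant $N\cap(D\rtimes\Lambda)'=\C1$; (b) factoriality of $N$, which is then immediate since $D\rtimes\Lambda\subseteq N$ forces $\cZ(N)=N\cap N'\subseteq N\cap(D\rtimes\Lambda)'$; (c) outerness of $\al$.

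For (a), take $x\in N$ commuting with $D\rtimes\Lambda$ and write its Fourier expansion $x=\sum_{s\in\Lambda}x_s u_s$ with $x_s\in A\ovt D$ and $(u_s)$ the canonical unitaries. Commutation with $1\ot D$ yields $d\,x_s=x_s\,\gamma_s(d)$ for all $d\in D$, $s\in\Lambda$, and commutation with the $u_t$ yields $\gamma_t(x_{t^{-1}st})=x_s$. The heart of the matter is to show $x_s=0$ for $s\neq e$, and here the two alternative assumptions on $D$ are treated differently. Under assumption (2), I would realize $D=\rL^\infty(Y,\nu)$, view $A\ovt D$ as $A$-valued functions on $Y$, so that $d\,x_s=x_s\,\gamma_s(d)$ reads $\bigl(d(y)-d(s^{-1}\!\cdot y)\bigr)\,x_s(y)=0$ a.e.\ $y$; choosing a countable family in $\rL^\infty(Y)$ that separates points and invoking essential freeness of $\Lambda\actson(Y,\nu)$, this forces $x_s(y)=0$ a.e.\ when $s\neq e$. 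Under assumption (1), the relation $d\,x_s=x_s\,\gamma_s(d)$ shows $x_s^*x_s$ and $x_sx_s^*$ commute with $D$, hence lie in $D'\cap(A\ovt D)=A\ovt\cZ(D)=A\ot1$; taking the polar decomposition $x_s=v_s|x_s|$ (so $|x_s|\in A\ot1$) one deduces $d\,v_s=v_s\,\gamma_s(d)$ for all $d\in D$, and then for every normal functional $\om$ on $A$ the element $b:=(\om\ot\id)(v_s)\in D$ satisfies $d\,b=b\,\gamma_s(d)$. A short computation gives $b^*b,\,bb^*\in\cZ(D)=\C1$, so if $b\neq0$ then $b/\|b\|$ is a unitary implementing $\gamma_s$ on $D$, contradicting outerness unless $s=e$; hence $(\om\ot\id)(v_s)=0$ for all $\om$, so $v_s=0$ and $x_s=0$ for $s\neq e$.

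In either case $x=x_e\in A\ovt D$ is fixed by the diagonal action $(\gamma_s)$, and under assumption (1) it moreover lies in $A\ot1$. The diagonal action of $\Lambda$ on $A=B^{\Gamma/\Gamma_0}$ is mixing because the action on $(B,\tau)$ is — one checks on $\rL^2$ that it acts on $\rL^2(A)\ominus\C1$ as a $c_0$-representation — and since $\Lambda$ is infinite this gives $A^\Lambda=\C1$, finishing case (1). Under assumption (2) the action on $A$ is (weakly) mixing and the action on $D$ is ergodic, so the diagonal action on $A\ovt D$ is ergodic: a nonzero $\Lambda$-fixed vector in $(\rL^2(A)\ominus\C1)\ovt\rL^2(D)$, viewed as a Hilbert--Schmidt operator, would produce a finite-dimensional $\Lambda$-invariant subspace of $\rL^2(A)\ominus\C1$, which is impossible for a $c_0$-representation. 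So $x\in\C1$ in all cases.

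For (c), the action $\al$ on $A$ is the co-induction of the faithful action $\beta$ on $B$ (and $B\neq\C$, since the infinite group $\Lambda$ acts faithfully on $B$), hence $\al$ is faithful on $A$ and therefore on $N$. If $\al_g=\Ad u$ with $g\neq e$ and $u\in\cU(N)$, then $\al_g$ restricts to the identity on $D\rtimes\Lambda$, so $u$ commutes with $D\rtimes\Lambda$; by (a) this gives $u\in\C1$ and hence $\al_g=\id$, contradicting faithfulness. Thus $\al$ is outer. The main obstacle is part (a), specifically the polar-decomposition-and-slicing argument of case (1) that turns outerness of each $\gamma_s$ into the vanishing of the Fourier coefficients $x_s$, together with verifying ergodicity of the diagonal $\Lambda$-action on $A\ovt D$ in case (2).
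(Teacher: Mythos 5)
Your treatment of the Fourier coefficients, the slice-map argument in case (1) (which in fact supplies details the paper only asserts, namely $N \cap (1 \ot D)' = A \ot 1$), the factoriality and the outerness step are all fine and follow the same overall route as the paper. The problem is your justification of ergodicity of the diagonal $\Lambda$-action on $A \ovt D$ in case (2). Recall that in this case $\Tr$ is allowed to be semifinite and infinite ($D = L^\infty(Z,\eta)$ with $\eta(Z)=\infty$), and this is exactly the situation needed later for Theorem A(1) and the fundamental group computation. Your argument identifies the fixed points of the diagonal action with fixed vectors in $L^2(A) \ot L^2(D)$ and kills the component in $(L^2(A)\ominus \C 1)\ot L^2(D)$ by the Hilbert--Schmidt trick. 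When $\Tr$ is infinite, a bounded $\Lambda$-invariant element $x \in A \ovt D$ need not lie in $L^2(A \ovt D)$, so the passage from ``no fixed vectors in $(L^2(A)\ominus\C1)\ot L^2(D)$'' to ``$(A\ovt D)^\Lambda = \C 1$'' breaks down: after subtracting the $1\ot D$-part (which ergodicity of $\Lambda \actson D$ makes scalar), the remaining invariant element is not square-integrable and cannot be fed into the compact-operator argument; cutting by finite-trace projections of $D$ destroys invariance. So for infinite $\eta$ your one-line reduction does not prove the statement you need.

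The statement itself is true, but it genuinely requires more than a $c_0$-representation argument: it is a relative of the Furstenberg--Weiss type theorem that a (mildly) mixing probability-preserving action times a conservative ergodic infinite-measure-preserving action is ergodic, and recurrence enters. The paper proves it directly and locally: writing the invariant element as a measurable function $F : Z \recht A \ominus \C 1$ with $F(s\cdot z) = \gamma_s(F(z))$, ergodicity of $\Lambda \actson (Z,\eta)$ makes $z \mapsto \|F(z)\|_2$ constant; if nonzero, one picks $a$ with the set $\cU = \{z : \|F(z)-a\|_2 < 1/3\}$ nonnegligible, uses mixing of $\Lambda \actson A$ to show $\eta(s\cdot\cU \cap \cU) = 0$ outside a finite set of $s$, and concludes that the orbit equivalence relation restricted to $\cU$ has finite classes, contradicting ergodicity on a nonatomic space. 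You should either reproduce such a pointwise argument or restrict your Hilbert-space argument to the case $\Tr(1) < \infty$ (where it is complete, and suffices for Theorem A(2)) and treat the semifinite case separately.
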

\begin{proof}
We first prove that $N \cap (D \rtimes \Lambda)' = \C 1$. In the case where $D$ is a factor and the action of $\Lambda$ on $D$ is outer, we have that $N \cap (1 \ot D)' = A \ot 1$. Since the action of $\Lambda$ on $(B,\tau)$ is mixing, the diagonal action on $(A,\tau)$ is still mixing, in particular ergodic, so that $N \cap (D \rtimes \Lambda)' = \C 1$.

In the case where $D=L^\infty(Z,\eta)$ is diffuse abelian and the action $\Lambda \actson (Z,\eta)$ is essentially free and ergodic, the essential freeness implies that $N \cap (1 \ot D)' = A \ovt D$. So we must prove that all $\Lambda$-invariant elements in $A \ovt D$ are scalar multiples of $1$. Let $F : Z \recht A \ominus \C 1$ be a measurable function satisfying $F(s \cdot z) = \gamma_s(F(z))$ for all $s \in \Lambda$ and a.e.\ $z \in Z$. We must prove that $F$ is zero a.e. Since $\Lambda$ acts ergodically on $(Z,\eta)$, the map $z \mapsto \|F(z)\|_2$ is constant a.e. If this constant differs from zero, we may assume that it is equal to $1$ a.e. We can then choose $a \in A \ominus \C 1$ with $\|a\|_2 = 1$ such that
$$\cU = \{z \in Z \mid \|F(z)-a\|_2 < 1/3\}$$
is nonnegligible. Since the action of $\Lambda$ on $(A,\tau)$ is mixing, we can take a finite subset $\cF \subset \Lambda$ such that $|\langle \gamma_s(a),a\rangle| < 1/3$ for all $s \in \Lambda - \cF$. We derive as follows that $\eta(s \cdot \cU \cap \cU) = 0$ for all $s \in \Lambda- \cF$. Indeed, otherwise we find $s \in \Lambda-\cF$ and a point $z \in \cU$ such that $s \cdot z \in \cU$, $\|F(z)\|_2=1$ and $\gamma_s(F(z)) = F(s \cdot z)$. But then we arrive at the contradiction
$$1 = |\langle F(s \cdot z),F(s \cdot z)\rangle| = |\langle \gamma_s(F(z)),F(s \cdot z)\rangle| < 2/3 + |\langle \gamma_s(a),a \rangle| < 1 \; .$$
So for almost every $z \in \cU$, we have that $\Lambda \cdot z \cap \cU \subset \cF \cdot z$. Therefore the restriction of the orbit equivalence relation of $\Lambda \actson Z$ to the nonnegligible subset $\cU$ has finite orbits almost everywhere. But this equivalence relation is ergodic and $\cU$ is nonatomic. This is absurd and the conclusion that $N \cap (D \rtimes \Lambda)' = \C 1$ follows.

We have in particular that $N$ is a factor. Assume that $g \in \Gamma$ and $V \in \cU(N)$ with $\al_g = \Ad V$. Since $\al_g(d) = d$ for all $d \in D \rtimes \Lambda$, it follows that $V$ is scalar. Hence $\al_g = \id$. Since $B \neq \C 1$ and since the action $(\beta_g)_{g \in \Gamma_0}$ of $\Gamma_0$ on $B$ is faithful, also the action $(\al_g)_{g \in \Gamma}$ of $\Gamma$ on $A$ is faithful. We conclude that $g = e$.
\end{proof}

We also record the following elementary result that we will need in Section \ref{sec.proofA}.

\begin{lemma}\label{lem.regular}
Let $\Gamma_1 < \Gamma$ be a torsion free subgroup and $\Gamma/\Gamma_0 = I \sqcup J$ a partition of $\Gamma/\Gamma_0$ into $\Gamma_1$-invariant subsets such that $g^{-1} \Gamma_1 g \cap \Gamma_0 = \{e\}$ whenever $g \Gamma_0 \in I$. Define the Hilbert space
$$L := L^2(N) \ominus L^2((B^J \ovt D) \rtimes \Lambda) \; .$$
Then the unitary representation $(\al_g)_{g \in \Gamma_1}$ of $\Gamma_1$ on $L$ is a multiple of the regular representation of $\Gamma_1$.
\end{lemma}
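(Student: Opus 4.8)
The plan is to describe an explicit orthonormal-type basis for $L$ and show that each basis vector generates, under the $\Gamma_1$-action, a copy of the regular representation $\ell^2(\Gamma_1)$, with the whole collection giving an orthogonal decomposition of $L$ into such copies.

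First I would fix the section $\theta : \Gamma/\Gamma_0 \recht \Gamma$ used in the co-induced construction, together with the associated cocycle $\om$. Since $A = B^{\Gamma/\Gamma_0}$, the Hilbert space $L^2(N)$ decomposes as an orthogonal sum indexed by pairs $(\cW, u_s \ot (\text{something in }L^2(D)))$, where $\cW \subset \Gamma/\Gamma_0$ runs over finite subsets and we attach to each coset in $\cW$ a vector in $L^2(B)\ominus \C 1$, tensor this with an element of $L^2(D)$ and with a canonical unitary $u_s$, $s \in \Lambda$. Removing $L^2((B^J \ovt D)\rtimes \Lambda)$ exactly removes those $\cW$ contained in $J$; so $L$ is the closed span of the ``elementary tensors'' $\xi = (\otimes_{h\Gamma_0 \in \cW} \pi_{h\Gamma_0}(b_{h\Gamma_0})) \ot d \, u_s$ with finite $\cW \subset \Gamma/\Gamma_0$ meeting $I$, each $b_{h\Gamma_0} \in L^2(B)\ominus\C 1$, $d \in L^2(D)$, $s \in \Lambda$. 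The action $(\al_g)_{g \in \Gamma_1}$ fixes $D\rtimes\Lambda$ pointwise and sends such a $\xi$ to $\otimes_{h\Gamma_0 \in g\cW} \pi_{h\Gamma_0}(\beta_{\om(g,\cdot)}(b_\cdot)) \ot d\, u_s$, so it permutes the index sets $\cW \mapsto g\cW$ (acting through the $\Gamma$-action on $\Gamma/\Gamma_0$) and twists the $B$-factors by the cocycle.

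The key point is freeness of this permutation action on the relevant index sets. Fix a finite $\cW \subset \Gamma/\Gamma_0$ with $\cW \cap I \neq \emptyset$, say $g_0\Gamma_0 \in \cW \cap I$. If $g \in \Gamma_1$ satisfies $g\cW = \cW$, then $g$ permutes the finite set $\cW$, so some power $g^k$ ($k \geq 1$) fixes $g_0\Gamma_0$, i.e.\ $g^k \in g_0\Gamma_0 g_0^{-1}$, hence $g^k \in g_0(\Gamma_0 \cap g_0^{-1}\Gamma_1 g_0)g_0^{-1}$; but $g_0\Gamma_0 \in I$ forces $g_0^{-1}\Gamma_1 g_0 \cap \Gamma_0 = \{e\}$, so $g^k = e$, and since $\Gamma_1$ is torsion free, $g = e$. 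Thus the $\Gamma_1$-orbit of each such $\cW$ is a free orbit (a copy of $\Gamma_1$). Partitioning the admissible finite subsets into $\Gamma_1$-orbits, and within each orbit choosing a representative $\cW_0$, one gets that the subspace of $L$ ``supported on the orbit of $\cW_0$'' is, as a $\Gamma_1$-representation, a copy of $\ell^2(\Gamma_1) \ot \cH_{\cW_0}$ where $\cH_{\cW_0} = (\bigotimes_{h\Gamma_0\in\cW_0}(L^2(B)\ominus\C 1)) \ot L^2(D) \ot \ell^2(\Lambda)$ is acted on trivially by $\Gamma_1$; hence it is a multiple of the regular representation. Summing over orbits, $L$ itself is a multiple of $\lambda_{\Gamma_1}$. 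I would spell out that the twisting by the cocycle $\om$ does not affect this conclusion: $\al_g$ still maps the $\cW$-block isometrically onto the $g\cW$-block (it is a unitary intertwiner of the $B^{\Gamma/\Gamma_0}$-pieces permuted along the orbit), so after identifying each block in an orbit with the representative block via the appropriate $\al$, the $\Gamma_1$-action becomes the genuine left regular action on $\ell^2(\Gamma_1)$ tensored with the identity.

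The main obstacle is purely bookkeeping: setting up the decomposition of $L^2(N)$ and of $L$ cleanly enough that the permutation-and-cocycle description of $\al_g$ is manifestly correct, and checking that ``supported on finite $\cW$ meeting $I$'' is precisely the complement of $L^2((B^J\ovt D)\rtimes\Lambda)$ (using that $J$ is $\Gamma_1$-invariant so that $g\cW \subset J \iff \cW \subset J$, which is what makes $L$ itself $\Gamma_1$-invariant). Once the freeness computation above is in place, no further analytic input is needed.
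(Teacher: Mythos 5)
Your proposal is correct and follows essentially the same route as the paper: decompose $L$ into orthogonal blocks indexed by the supports of elementary tensors, observe that $(\al_g)_{g\in\Gamma_1}$ permutes these blocks according to the action on $\Gamma/\Gamma_0$, and show the permutation action on the index set is free using the hypothesis $g^{-1}\Gamma_1 g\cap\Gamma_0=\{e\}$ together with torsion freeness. The only (immaterial) differences are that the paper indexes its blocks by the nonempty finite subsets $\cF\subset I$ alone, absorbing $B^J$ into each block, and deduces triviality of the setwise stabilizer from the finite-index inclusion of the (trivial) pointwise stabilizer, whereas you use a finer indexing and pass to a power of a stabilizing element.
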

\begin{proof}
For every finite nonempty subset $\cF \subset I$, define $L_\cF \subset L$ as the closed linear span of $((B \ominus \C 1)^\cF \, B^J \ot 1) L^2(D \rtimes \Lambda)$. Note that $L$ is the orthogonal direct sum of all the $L_\cF$. Fix a nonempty finite subset $\cF \subset I$ and define $\Gamma_2 = \{g \in \Gamma_1 \mid g \cF = \cF\}$. Since for every $g \in \Gamma_1$, we have $\al_g(L_\cF) = L_{g \cF}$, it suffices to prove that $\Gamma_2 = \{e\}$. From our assumption that $g^{-1} \Gamma_1 g \cap \Gamma_0 = \{e\}$ whenever $g \Gamma_0 \in I$, it follows that the subgroup $\Gamma_3 = \{g \in \Gamma_1 \mid \forall i \in \cF, g \cdot i = i\}$ equals $\{e\}$. Since $\cF$ is a finite set, $\Gamma_3 < \Gamma_2$ has finite index. So $\Gamma_2$ is finite. Since $\Gamma_1$ is torsion free, it follows that $\Gamma_2 = \{e\}$.
\end{proof}

The aim of this section is to understand when these actions $(\al_g)_{g \in \Gamma}$ are outer conjugate, keeping fixed $\Gamma_0 < \Gamma$ but varying all the other data,

So we keep $\Gamma_0 < \Gamma$ fixed, but further assume that we have, for $i=1,2$, von Neumann algebras $(B_i,\tau)$ and $(D_i,\Tr)$, infinite groups $\Lambda_i$ and actions $(\be^i_g)_{g \in \Gamma_0}$ and $(\gamma^i_s)_{s \in \Lambda_i}$. This results into factors $N_i$ with outer actions $(\al^i_g)_{g \in \Gamma}$.

Throughout, we keep as standing assumptions the properties listed in the beginning of this section.

\begin{lemma}\label{lem2}
Assume that $\psi : N_1 \recht N_2$ is an outer conjugacy between $(\al^1_g)_{g \in \Gamma}$ and $(\al^2_g)_{g \in \Gamma}$. Then there exists a unitary $w \in \cU(N_2)$ and an automorphism $\delta \in \Aut(\Gamma)$ such that the isomorphism $\psi' = (\Ad w) \circ \psi$ satisfies
$$\psi'(D_1 \rtimes \Lambda_1) = D_2 \rtimes \Lambda_2 \;\; , \quad \psi'(D_1) = D_2 \quad\text{and}\quad \psi' \circ \al^1_g = \al^2_{\delta(g)} \circ \psi' \quad\text{for all}\;\; g \in \Gamma \; .$$
\end{lemma}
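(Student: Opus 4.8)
The plan is to use Popa's deformation/rigidity machinery exactly as outlined in the introduction: build a malleable deformation of $N_2$ coming from the co-induced Bernoulli-type structure, exploit the nonamenability of $\Gamma$ (more precisely, that $\Gamma_0 < \Gamma$ is not co-amenable) via spectral gap rigidity, and then clean up with the mixing lemmas already proved.

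First I would set up the deformation. Writing $\psi : N_1 \recht N_2$ for the outer conjugacy, with unitaries $(w_g)_{g\in\Gamma}$ and $\delta_0\in\Aut(\Gamma)$ so that $\psi\circ(\Ad w_g)\circ\al^1_g = \al^2_{\delta_0(g)}\circ\psi$. The key feature of the construction is that $P_1 := D_1\rtimes\Lambda_1 \subset N_1$ is pointwise fixed by $(\al^1_g)_{g\in\Gamma}$; hence $\psi(P_1)$ is, up to the inner perturbation $(\Ad w_g)$, invariant under $(\al^2_g)_{g\in\Gamma}$ — concretely, $\al^2_{\delta_0(g)}(\psi(x)) = w_g^* \,\psi(x)\, w_g$ doesn't immediately hold, so instead one uses that the associated $1$-cocycle for $\al^2$ restricted to $\psi(P_1)$ is trivial on the nose because $\al^1$ fixes $P_1$. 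Since $A_2 = B_2^{\Gamma/\Gamma_0}$ carries the co-induced action, and since $\Gamma_0 < \Gamma$ is not co-amenable, the orthogonal complement $L^2(A_2)\ominus\C1$ is a weakly mixing, "non-amenable" $\Gamma$-module; this is what makes the Gaussian/malleable deformation $(\alpha_t)$ of $N_2\rtimes_{\al^2}\Gamma$ (or rather of $N_2$ rel $D_2\rtimes\Lambda_2$) satisfy a \emph{spectral gap / transversality} estimate. From this one deduces, by the standard argument of \cite{Po06} combined with \cite{Po01a,Po03}, that the deformation converges uniformly on the unit ball of $\psi(P_1)$, and therefore $\psi(P_1)$ can be unitarily conjugated inside the "$t=1$ copy", producing a unitary $w\in\cU(N_2)$ with $w\,\psi(P_1)\,w^* \subseteq P_2 := D_2\rtimes\Lambda_2$.

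Next I would upgrade this inclusion to an equality. Replacing $\psi$ by $\psi' := (\Ad w)\circ\psi$, one checks that $\psi'(P_1)\subseteq P_2$ forces the modified cocycle $(w\, \psi(w_g)\, \al^2_{\delta_0(g)}(w)^*)$ to normalize $P_2$; since $N_2\cap P_2' = \C1$ by Lemma \ref{lem1}, and since $\al^2_g$ is the identity on $P_2$, a short computation shows these unitaries are scalars, so after absorbing them we get $\psi'\circ\al^1_g = \al^2_{\delta(g)}\circ\psi'$ for some $\delta\in\Aut(\Gamma)$ and $\psi'(P_1)\subseteq P_2$. Applying the same argument to $(\psi')^{-1}$ (which is an outer conjugacy in the other direction and sends $P_2$ somewhere; intersecting gives $\psi'(P_1)\supseteq P_2$ after a further inner move) yields $\psi'(P_1) = P_2$. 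Finally, to get $\psi'(D_1) = D_2$: both $D_1\subset P_1$ and $D_2\subset P_2$ are, by construction, "mixing" in the sense that the $\Lambda_i$-action on $D_i$ relative to the ambient data satisfies the hypotheses of Lemma \ref{lem.mixing} — more precisely, $D_i$ is the relative commutant obstruction and the Bernoulli/co-induced $\Lambda_i$-action is mixing — so any von Neumann subalgebra $Q\subseteq P_1$ with $Q\not\prec_{P_1} D_1$ satisfies: any $v$ with $vQ\subseteq P_2 v$ lies in... here one sets $Q = \psi'^{-1}(D_2)$, checks $Q\not\prec_{P_1} D_1$ would contradict ergodicity/mixing, and concludes $\psi'(D_1) \subseteq D_2$; symmetry gives equality.

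The main obstacle, I expect, is the spectral gap / uniform convergence step: one must correctly identify the malleable deformation for the co-induced action $A_2 = B_2^{\Gamma/\Gamma_0}$ (not a plain Bernoulli shift, so the $1$-cocycle $\om$ intervenes) and verify that non-co-amenability of $\Gamma_0 < \Gamma$ really does give the spectral gap needed to run the \cite{Po06} argument on the unit ball of $\psi(P_1)$ — in particular that the relevant $\Gamma$-representation on $L^2(A_2)\ominus\C1$ has no almost-invariant vectors relative to the fixed part. The second subtlety is bookkeeping the inner perturbation $(w_g)$ throughout: each unitary conjugation of $\psi$ changes the cocycle, and one has to check at each stage that the cocycle stays well-defined and ultimately becomes trivial (scalar), using $N_2\cap P_2'=\C1$. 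Both are by now standard in the deformation/rigidity literature, but getting the co-induced deformation right is where the real work lies; the mixing lemmas \ref{lem.mixing} and \ref{lem.commuting} then finish the argument essentially formally.
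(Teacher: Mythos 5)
Your overall strategy is indeed the paper's (a malleable deformation of the co-induced algebra, spectral gap from the non-co-amenability of $\Gamma_0<\Gamma$ via a representation twisted by the cocycle unitaries, then the mixing and commuting-square lemmas), but two of the steps you assert are exactly where the substance lies, and one of them is argued in a way that does not work. First, the passage from uniform convergence of the deformation on the unit ball of $\psi(P_1)$ to a unitary $w$ with $w\psi(P_1)w^*\subseteq P_2$ is not what the spectral gap argument delivers. Running the argument of \cite{Po06} produces an intertwiner $W$ with $bW=W\zeta_1(b)$, and from this one only obtains that (a finite-trace corner of) $\psi(P_1)$ embeds, in the sense of $\prec$, into the algebra generated by $D_2\rtimes\Lambda_2$ and \emph{finitely many} tensor factors, i.e.\ into $M_2(\cF)=p_2\bigl((B_2^\cF\ovt D_2)\rtimes\Lambda_2\bigr)p_2$ for some finite $\cF\subset\Gamma/\Gamma_0$; upgrading this to a unitary conjugation into $M_2(\cF)$ already needs \cite{Va07} plus Lemma \ref{lem.mixing}. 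Getting from $M_2(\cF)$ down to $P_2=M_2(\emptyset)$ is a separate idea your sketch does not contain: since $\al^1_g(P_1)=P_1$, the conjugated copy $Q_2$ and $\al^2_g(Q_2)$ are unitarily conjugate, and choosing $g$ with $g\cF\cap\cF=\emptyset$ (possible because $\Gamma_0$ has infinite index) one applies Lemma \ref{lem.commuting} to force $Q_2\prec P_2$. Saying that the lemmas finish the argument essentially formally hides precisely this step.

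Second, your route to $\psi'(D_1)=D_2$ is not correct as stated. What mixing gives (via Lemma \ref{lem.mixing}) is only this: if $\psi'(p_1D_1p_1)\not\prec_{P_2}p_2D_2p_2$, then $\psi'(A_1p_1)\subset P_2$, which is absurd; hence the \emph{intertwining} $\psi'(p_1D_1p_1)\prec_{P_2}p_2D_2p_2$. Intertwining is much weaker than containment, and your proposed use of $Q=(\psi')^{-1}(D_2)$ is not a configuration Lemma \ref{lem.mixing} applies to (that lemma compares a subalgebra $Q\subset P$ with elements $v$ of the ambient algebra satisfying $vQ\subset Pv$; here you must compare two distinguished subalgebras of $P_2$). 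To upgrade the intertwining to a unitary conjugation of $\psi'(D_1p_1)$ onto $D_2p_2$ inside $P_2$ one needs \cite[Theorem A.1]{Po01b} when the $D_i$ are Cartan subalgebras, respectively \cite[Lemma 8.4]{IPP05} when the $D_i$ are factors — and this is exactly where the standing hypotheses on $\Lambda_i$ (essential freeness and ergodicity, resp.\ torsion-free FC-radical) enter; your proposal never invokes either result or those hypotheses, which is a genuine gap rather than a gloss. The remaining ingredients of your sketch (scalarity of the cocycle unitaries from $N_2\cap P_2'=\C1$ once $\psi'(P_1)\subseteq P_2$, and upgrading the inclusion to equality by applying the argument to $\psi^{-1}$ and feeding the resulting element into Lemma \ref{lem.mixing}) do match the paper's proof in spirit.
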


Our proof of Lemma \ref{lem2} is very similar to the proof of \cite[Theorem 4.1]{Po06}. We use the spectral gap methods of \cite{Po06} and the malleable deformation for co-induced actions developed in \cite{Po01a,Po03}. We more precisely use the following variant of that malleable deformation, due to \cite{Io06}. To introduce the notations, we drop the indices $i=1,2$ from $B_i$, $A_i$, $D_i$, etc.

Define $\Btil = B * L \Z$ with respect to the natural tracial states that we all denote by $\tau$. Denote by $(u_n)_{n \in \Z}$ the canonical unitaries in $L \Z$ and define $h \in L \Z$ as the selfadjoint element with spectrum $[-\pi,\pi]$ satisfying $u_1 = \exp(ih)$. For every $t \in \R$, we put $u_t = \exp(ith)$ and we define the $1$-parameter group $(\zeta_t)_{t \in \R}$ of inner automorphisms of $\Btil$ given by $\zeta_t = \Ad u_t$. We extend the actions $(\beta_g)_{g \in \Gamma_0}$ and $(\gamma_s)_{s \in \Lambda}$ to $\Btil$ by acting trivially on $L \Z$. These two actions and the action $(\zeta_t)_{t \in \R}$ all commute.

Define $\Atil$ as the infinite tensor product $\Atil = \Btil^{\Gamma/\Gamma_0}$. We continue to denote by $(\gamma_s)_{s \in \Lambda}$ and $(\zeta_t)_{t \in \R}$ the diagonal actions on $\Atil$. They commute with the co-induced action $(\al_g)_{g \in \Gamma}$ on $\Atil$. Moreover this co-induced action extends the action $(\al_g)_{g \in \Gamma}$ on $A$. We finally consider the crossed product $\Ntil = (\Atil \ovt D) \rtimes \Lambda$ with respect to the diagonal action of $\Lambda$, together with the action $(\al_g)_{g \in \Gamma}$ of $\Gamma$ on $\Ntil$ that extends the given action on $\Atil$ and that is the identity on $D \rtimes \Lambda$.

Our assumption that $\Gamma_0$ is not co-amenable in $\Gamma$ is used to obtain the following result.

\begin{lemma}\label{lem.gap}
Assume that $(V_g)_{g \in \Gamma}$ are unitaries in $\cU(N)$ and that $\Om : \Gamma \times \Gamma \recht \T$ is a map satisfying $V_g \, \al_g(V_h) = \Om(g,h) \, V_{gh}$ for all $g,h \in \Gamma$. The unitary representation
$$\rho : \Gamma \recht \cU(L^2(\Ntil \ominus N)) : \rho_g(\xi) = V_g \, \al_g(\xi) \, V_g^*$$
does not weakly contain the trivial representation.
\end{lemma}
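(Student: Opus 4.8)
The plan is to exploit the fact that the $\Gamma$-action on the new tensor factors $L\Z$ attached at the positions in $\Gamma/\Gamma_0$ is, in a suitable sense, a co-induced action from $\Gamma_0$, so the representation $\rho$ decomposes into pieces each of which is (a multiple of) a representation induced from $\Gamma_0$, and therefore weakly contained in the quasi-regular representation $\ell^2(\Gamma/\Gamma_0)^{\oplus\infty}$. Since $\Gamma_0$ is not co-amenable in $\Gamma$, that quasi-regular representation does not weakly contain the trivial representation, and hence neither does $\rho$.

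\smallskip

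\textbf{Step 1: Reduce to the untwisted representation.} First I would observe that the map $g\mapsto V_g$ is a projective representation with $2$-cocycle $\Om$, and that conjugation by $V_g$ defines the genuine (non-projective) action $g\mapsto \rho_g$ on $L^2(\Ntil\ominus N)$ because the scalar $\Om(g,h)$ cancels in $\Ad(V_g)\al_g\Ad(V_g)^{*}$. Moreover, since each $V_g\in\cU(N)$ and $N$ acts on $L^2(\Ntil)$ preserving $L^2(N)$ and hence its complement, the representation $\rho_g$ is unitarily equivalent to the representation $g\mapsto \al_g$ restricted to $L^2(\Ntil\ominus N)$, \emph{twisted by the inner perturbation coming from $N$}. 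More precisely, the key point is that weak containment of the trivial representation is insensitive to inner perturbations by unitaries in $N$: if $\xi_n$ were almost-invariant vectors for $\rho$, then using that $N$ acts on $L^2(\Ntil\ominus N)$ and that a weakly-clustering/averaging argument over the $V_g$ produces almost-invariant vectors for the plain action $g\mapsto \al_g$ on $L^2(\Ntil\ominus N)$. Actually the cleanest route is to reduce directly: it suffices to prove that $g\mapsto\al_g$ on $L^2(\Ntil\ominus N)$ does not weakly contain $1_\Gamma$, and then transfer via the standard fact (e.g.\ as in \cite[Section 4]{Po06}) that a unitary perturbation $\rho_g=\Ad(V_g)\circ\al_g$ with $V_g\in\cU(N)$ has almost invariant vectors only if $\al|_{L^2(\Ntil\ominus N)}$ does.

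\smallskip

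\textbf{Step 2: Decompose $L^2(\Ntil\ominus N)$ as a $\Gamma$-representation.} Recall $\Ntil=(\Atil\ovt D)\rtimes\Lambda$ with $\Atil=\Btil^{\Gamma/\Gamma_0}$, $\Btil=B*L\Z$, and $\al_g$ is the co-induced action on $\Atil$, trivial on $D\rtimes\Lambda$. Write $\Btil=\C1\oplus (B\ominus\C1)\oplus \Btil'$ where $\Btil'$ is the complement of $B$ in $\Btil$ (the "words involving $L\Z$"). Then $L^2(\Atil)$ has an orthonormal decomposition indexed by finitely-supported functions $\Gamma/\Gamma_0\to\{\,\text{one of the three pieces}\,\}$, and $L^2(\Ntil\ominus N)$ is the span of those tensors in which at least one coordinate lies in $\Btil'$ (the coordinates in $B\ominus\C1$ together with the $D\rtimes\Lambda$ part already live inside $L^2(N)$). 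The co-induced action permutes these basis tensors according to the $\Gamma$-action on $\Gamma/\Gamma_0$, combined with the $\Gamma_0$-actions $\beta$ (on the $B$-type coordinates) acting trivially on the $L\Z$-directions. I would organize $L^2(\Ntil\ominus N)$ as an orthogonal sum, over finite nonempty subsets $\cF\subset\Gamma/\Gamma_0$ of the "$\Btil'$-support", of subspaces on which $\Gamma$ acts by moving $\cF$ around; each such summand is a representation induced from the (finite-index-free, by the same bookkeeping as in Lemma \ref{lem.regular}) stabilizer of $\cF$, and in particular is weakly contained in $\ell^2(\Gamma/\Stab(\cF))^{\oplus\infty}$, which is weakly contained in $\ell^2(\Gamma/\Gamma_0)^{\oplus\infty}$ since $\Stab(\cF)$ is contained in a conjugate of $\Gamma_0$ up to finite index — more carefully, since each point-stabilizer in $\Gamma/\Gamma_0$ is a conjugate of $\Gamma_0$, the stabilizer of a finite set $\cF$ is commensurable with an intersection of conjugates of $\Gamma_0$, hence still not co-amenable, so its quasi-regular representation on $\ell^2(\Gamma/\Stab\cF)$ does not contain $1_\Gamma$.

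\smallskip

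\textbf{Step 3: Conclude.} Combining: $L^2(\Ntil\ominus N)$ as a $\Gamma$-representation is weakly contained in a representation induced from non-co-amenable subgroups, hence does not weakly contain $1_\Gamma$; by Step 1 neither does $\rho$. The main obstacle I expect is Step 1 — making the reduction from the twisted representation $\rho_g=\Ad(V_g)\circ\al_g$ to the untwisted $\al_g$ fully rigorous, i.e.\ showing that the inner perturbation by $\cU(N)$ cannot create almost-invariant vectors. This is exactly the kind of spectral-gap bookkeeping done in \cite[proof of Theorem 4.1]{Po06}: one uses that $N$ itself has spectral gap in $\Ntil$ in the relevant sense (again coming from non-co-amenability of $\Gamma_0$, applied to the inclusion $N\subset\Ntil$ on the $\Btil$-level, where $\Btil\ominus\C1$ as a $\Gamma$-module via co-induction is weakly contained in $\ell^2(\Gamma/\Gamma_0)^{\oplus\infty}$). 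So in practice I would merge Steps 1 and 2: directly analyze the $(\Gamma,N)$-bimodule structure of $L^2(\Ntil\ominus N)$, show it is weakly contained in $L^2(N)\otimes \ell^2(\Gamma/\Gamma_0)^{\oplus\infty}\otimes L^2(N)$ as an $N$-$N$-bimodule with compatible $\Gamma$-action, and invoke non-co-amenability of $\Gamma_0$ to rule out almost-$\Gamma$-invariant, $N$-central-ish vectors.
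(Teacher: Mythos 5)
The main gap is your Step 1. The asserted ``standard fact'' --- that a perturbation $\rho_g = \Ad(V_g)\circ\al_g$ by unitaries $V_g\in\cU(N)$ can have almost invariant vectors only if the untwisted representation $\al$ on $L^2(\Ntil\ominus N)$ does --- is not a standard fact and is false as a general principle: conjugating by a cocycle of unitaries from $N$ can very well create (almost) invariant vectors. For instance, on $L^2(N)\ominus\C 1$, if an action happens to be implemented by unitaries $u_g\in N$, the choice $V_g=u_g^*$ makes the twisted representation trivial even when the untwisted one has spectral gap. There is no averaging or weak-clustering trick that removes the $V_g$, and the paper never makes such a reduction; the spectral gap arguments of \cite{Po06} you cite also do not furnish it. Since controlling the twisted representation is precisely the content of the lemma, this is the step that is missing, and your closing paragraph only gestures at it (``rule out almost-$\Gamma$-invariant, $N$-central-ish vectors'') without an argument.

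The repair --- and the paper's actual route --- is to observe that the twist is harmless not because one can untwist, but because each subspace $H(\cF)$, defined as the closure of $((\Btil\ominus B)^{\cF}\ot 1)L^2(N)$ for a finite nonempty $\cF\subset\Gamma/\Gamma_0$, is an $N$-$N$-subbimodule of $L^2(\Ntil\ominus N)$; hence $\Ad V_g$ preserves each $H(\cF)$ and $\rho_g(H(\cF))=H(g\cF)$ exactly as for $\al$. Almost $\rho$-invariant unit vectors $\xi_n$ then give, via $\cF\mapsto\|P_{H(\cF)}\xi_n\|_2^2$, almost invariant probability measures on the set $\cS$ of finite nonempty subsets of $\Gamma/\Gamma_0$, hence a $\Gamma$-invariant mean on $\cS$, which is pushed forward to $\Gamma/\Gamma_0$ using that the stabilizer of a set containing $e\Gamma_0$ lies in $\Gamma_0$ and has finite index in its setwise normalizer $\Norm(\cF)$. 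Note also that your Step 2 claim that each induced summand is weakly contained in $\ell^2(\Gamma/\Stab(\cF))^{\oplus\infty}$ is unjustified even for the untwisted action: $\Stab(\cF)$ acts nontrivially on $H(\cF)$ (through $\beta$ on the $B$-legs and through the co-induced action on the $L^2(N)$ tail), so its representation there is not weakly contained in a multiple of the trivial representation, and in the general setting of this section $\Gamma_0$ need not be amenable, so you cannot fall back on weak containment in the regular representation either. What is true, and suffices, is the weaker statement that a representation induced from a non-co-amenable subgroup never weakly contains $1_\Gamma$ --- proved by exactly the measure-pushforward argument above, which has the decisive advantage of applying verbatim to the twisted representation $\rho$.
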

\begin{proof}
We write $H = L^2(\Ntil \ominus N)$. Denote by $\cS$ the set of all finite nonempty subsets $\cF \subset \Gamma/\Gamma_0$. For every $\cF \in \cS$, we define $H(\cF)$ as the closed linear span of $ ((\Btil \ominus B)^\cF \ot 1) L^2(N)$ inside $L^2(\Ntil \ominus N)$. One checks that $H$ is the orthogonal direct sum of the subspaces $H(\cF)$, $\cF \in \cS$. We have that $\rho_g(H(\cF)) = H(g \cF)$ for all $g \in \Gamma$ and $\cF \in \cS$. Also note that $H(\cF)$ is an $N$-$N$-subbimodule of $L^2(\Ntil \ominus N)$.

Denote by $\Prob(\cS)$ the set of probability measures on the countable set $\cS$. Denote by $(H)_1$ the set of unit vectors in $H$. The map
$$\theta : (H)_1 \recht \Prob(\cS) : (\theta(\xi))(\cF) = \|P_{H(\cF)}(\xi)\|_2^2$$
satisfies $\theta(\rho_g(\xi)) = g \cdot \theta(\xi)$.

Assume now that the unitary representation $\rho$ weakly contains the trivial representation, i.e.\ admits a sequence of unit vectors $\xi_n \in (H)_1$ satisfying $\lim_n \|\rho_g(\xi_n) - \xi_n\|_2 = 0$ for all $g \in \Gamma$. We will prove that $\Gamma_0$ is co-amenable in $\Gamma$. Define $\om_n = \theta(\xi_n)$. Then $\om_n$ is a sequence of probability measures on $\cS$ satisfying $\lim_n \|g \cdot \om_n - \om_n\|_1 = 0$.

Choose a set $\cS_0 \subset \cS$ of representatives for the orbits of the action $\Gamma \actson \cS$. We make this choice such that $e\Gamma_0 \in \cF$ for every $\cF \in \cS_0$. For every $\cF \in \cS_0$, define $\Norm(\cF) = \{g \in \Gamma \mid g \cF = \cF\}$. Since $\cS_0$ is a set of representatives for the action $\Gamma \actson \cS$, we identify $\cS$ with the disjoint union of the sets $\Gamma/\Norm(\cF)$, $\cF \in \cS_0$.

For every $\cF \in \cS_0$, write $\Stab(\cF) = \{g \in \Gamma \mid g h \Gamma_0 = h \Gamma_0 \;\;\text{for all}\;\; h\Gamma_0 \in \cF\}$. Since all the $\cF$ are finite sets, we have that $\Stab(\cF)$ is a finite index subgroup of $\Norm(\cF)$. We define $\cS'$ as the disjoint union of the sets $\Gamma / \Stab(\cF)$, $\cF \in \cS_0$. Putting together the finite-to-one maps $\Gamma/\Stab(\cF) \recht \Gamma/\Norm(\cF)$, we obtain the $\Gamma$-equivariant finite-to-one map $\theta' : \cS' \recht \cS$. This map $\theta'$ induces a $\Gamma$-equivariant isometry of $\Prob(\cS)$ into $\Prob(\cS')$. Applying this isometry to $\om_n$, we find a sequence of probability measures $\om'_n$ on $\cS'$ satisfying $\lim_n \|g \cdot \om'_n - \om'_n\|_1 = 0$ for all $g \in \Gamma$. Taking a weak$^*$ limit point, it follows that the action $\Gamma \actson \cS'$ admits an invariant mean. For every $\cF \in \cS_0$, we have $e \Gamma_0 \in \cF$ and therefore $\Stab(\cF) \subset \Gamma_0$. Define the map $\theta\dpr : \cS' \recht \Gamma / \Gamma_0$ given by $\theta\dpr(h \Stab(\cF)) = h \Gamma_0$ for all $\cF \in \cS_0$ and $h \in \Gamma$. Since $\theta\dpr$ is $\Gamma$-equivariant, we push forward the $\Gamma$-invariant mean on $\cS'$ to a $\Gamma$-invariant mean on $\Gamma/\Gamma_0$. This precisely means that $\Gamma_0$ is co-amenable inside $\Gamma$.
\end{proof}

We are now ready to prove Lemma \ref{lem2}.

\begin{proof}[Proof of Lemma \ref{lem2}]
Fix a projection $p_1 \in D_1 \subset N_1$ with $0 < \Tr(p_1) < \infty$. After unitarily conjugating $\psi$, we may assume that $\psi(p_1) \in D_2$. Put $p_2 = \psi(p_1)$.

Since $\psi$ is an outer conjugacy between $(\al^1_g)_{g \in \Gamma}$ and $(\al^2_g)_{g \in \Gamma}$, we find an automorphism $\delta \in \Aut(\Gamma)$, unitaries $(V_g)_{g \in \Gamma}$ in $\cU(N_2)$ and a map $\Om : \Gamma \times \Gamma \recht \T$ such that
$$\psi \circ \al^1_{\delta^{-1}(g)} = (\Ad V_g) \circ \al^2_g \circ \psi \quad\text{and}\quad V_g \, \al^2_g(V_h) = \Om(g,h) \, V_{gh} \quad\text{for all}\;\; g,h \in \Gamma \; .$$
Define as above the malleable deformation $(\zeta_t)_{t \in \R}$ of $\Ntil_2 = (\Atil_2 \ovt D_2) \rtimes \Lambda_2$. Denote by $\rho$ the unitary representation of $\Gamma$ on $L^2(\Ntil_2 \ominus N_2)$ given by $\rho_g(\xi) = V_g \, \al^2_g(\xi) \, V_g^*$. By Lemma \ref{lem.gap}, $\rho$ does not weakly contain the trivial representation. We then find a constant $\kappa > 0$ and a finite subset $\cF \subset \Gamma$ such that
\begin{equation}\label{eq.mygap}
\|\xi\|_2 \leq \kappa \sum_{g \in \cF} \|\rho_g(\xi) - \xi\|_2 \quad\text{for all}\;\; \xi \in L^2(\Ntil_2 \ominus N_2) \; .
\end{equation}
We write $M_i = p_i N_i p_i$ and $\Mtil_i = p_i \Ntil_i p_i$. Put $P_i = p_i (D_i \rtimes \Lambda_i) p_i$. Put $\eps = \|p_2\|_2/4$ and $\delta = \eps / (2 \kappa \, |\cF|)$. Take an integer $n_0$ large enough such that $t=n_0^{-1}$ satisfies
$$\|(V_g - \zeta_t(V_g)) p_2\|_2 \leq \delta \quad\text{for all}\;\; g \in \cF \; .$$
For every $a \in P_1$ and $g \in \Gamma$, we have $\al^1_g(a) = a$ and therefore $V_g \al^2_g(\psi(a)) V_g^* = \psi(a)$. By our choice of $t$, we get that
\begin{equation}\label{eq.star}
\|V_g \, \al^2_g (\zeta_t(b)) \, V_g^* - \zeta_t(b)\|_2 \leq 2\delta \quad\text{for all $g \in \cF$ and all $b \in \psi(P_1)$ with $\|b\| \leq 1$.}
\end{equation}
Denote by $E : \Ntil \recht N$ the unique trace preserving conditional expectation. Whenever $b \in \psi(P_1)$ with $\|b\|\leq 1$, we put $\xi = \zeta_t(b) - E(\zeta_t(b))$ and conclude from \eqref{eq.star} that
$$\kappa \sum_{g \in \cF} \|\rho_g(\xi) - \xi\|_2 \leq 2 \kappa \, |\cF|\, \delta = \eps \; .$$
It follows from \eqref{eq.mygap} that $\|\xi\|_2 \leq \eps$. A direct computation shows that $(\zeta_t)$ satisfies the following transversality property of \cite[Lemma 2.1]{Po06}.
$$\|b - \zeta_t(b)\|_2 \leq \sqrt{2} \, \|\zeta_t(b) - E(\zeta_t(b))\|_2 \quad\text{for all}\;\; b \in M_2 \; .$$
We conclude that $\|b - \zeta_t(b)\|_2 \leq 2 \eps$ for all $b \in \psi(P_1)$ with $\|b\|\leq 1$. It follows that for all $b \in \cU(\psi(P_1))$,
$$|\Tr(b \zeta_t(b^*)) - \Tr(b b^*)| \leq \|b\|_2 \, \|b - \zeta_t(b)\|_2 \leq \|p_2\|_2 \, 2\eps = \Tr(p_2)/2 \; .$$
So $\Tr(b \zeta_t(b^*)) \geq \Tr(p_2)/2$ for all $b \in \cU(\psi(P_1))$.

Defining $W \in \Mtil_2$ as the unique element of minimal $\|\,\cdot\,\|_2$ in the weakly closed convex hull of $\{b \zeta_t(b^*) \mid b \in \cU(\psi(P_1))\}$, it follows that $\Tr(W) \geq \Tr(p_2)/2$ and $b W = W \zeta_t(b)$ for all $b \in \psi(P_1)$. In particular, $W$ is a nonzero element of $\Mtil_2$ and $WW^*$ commutes with $\psi(P_1)$.

Since $V_g \al^2_g(b) V_g^* = b$ for all $g \in \Gamma$ and all $b \in \psi(P_1)$, the elements $W_g := V_g \, \al^2_g(W) \, \zeta_t(V_g^*)$ also satisfy $b W_g = W_g \zeta_t(b)$ for all $b \in \psi(P_1)$. The join of the left support projections of all $W_g$, $g \in \Gamma$, is a projection $q \in \Mtil_2 \cap \psi(P_1)'$ that satisfies $q = V_g \, \al^2_g(q) \, V_g^*$ for all $g \in \Gamma$. By Lemma \ref{lem.gap}, $q \in M_2$. But then, by Lemma \ref{lem1}, we get that $q \in \psi(M_1 \cap P_1') = \C p_2$. Since $q$ is nonzero, we conclude that $q = p_2$. It follows that we can find a $g \in \Gamma$ such that $W' = W \zeta_t(W_g)$ is nonzero. By construction, we have $b W' = W' \zeta_{2t}(b)$ for all $b \in \psi(P_1)$. We can repeat the same reasoning inductively. Since $t = 1/n_0$, we find a nonzero element $W \in \Mtil_2$ satisfying $b W = W \zeta_1(b)$ for all $b \in \psi(P_1)$.

For every finite subset $\cF \subset \Gamma/\Gamma_0$, we define $M_2(\cF) = p_2((B_2^\cF \ovt D_2) \rtimes \Lambda_2)p_2$. We claim that there exists a finite subset $\cF \subset \Gamma/\Gamma_0$ such that $\psi(P_1) \prec_{M_2} M_2(\cF)$. Indeed, if this is not the case, we find a sequence of unitaries $b_n \in \cU(\psi(P_1))$ satisfying
$$\|E_{M_2(\cF)}(x b_n y)\|_2 \recht 0 \quad\text{for all}\;\; x,y \in M_2 \;\;\text{and all finite subsets}\;\; \cF \subset \Gamma/\Gamma_0 \; .$$
We claim that
\begin{equation}\label{eq.claim}
\|E_{M_2}(x \zeta_1(b_n) y)\|_2 \recht 0 \quad\text{for all}\;\; x,y \in \Mtil_2 \; .
\end{equation}
Since the linear span of all $M_2 \Btil_2^\cF$, $\cF \subset \Gamma/\Gamma_0$ finite, is $\|\,\cdot\,\|_2$-dense in $\Mtil_2$, it
suffices to prove \eqref{eq.claim} for all $x,y \in \Btil_2^\cF p_2$ and all finite subsets $\cF \subset \Gamma/\Gamma_0$. But for such $x,y \in \Btil_2^\cF p_2$, we have
$$E_{M_2}(x \zeta_1(b_n) y) = E_{M_2}\bigl(x \zeta_1(E_{M_2(\cF)}(b_n)) y \bigr)$$
and the conclusion follows from our choice of $(b_n)$. So \eqref{eq.claim} is proven. It follows in particular that $\|E_{M_2}(W \zeta_1(b_n) W^*)\|_2 \recht 0$. Since
$$E_{M_2}(W \zeta_1(b_n) W^*) = E_{M_2}(b_n WW^*) = b_n E_{M_2}(WW^*)$$
and since $b_n$ is unitary, we conclude that $WW^*= 0$. This is absurd and we have proven the existence of a finite subset $\cF \subset \Gamma/\Gamma_0$ such that $\psi(P_1) \prec_{M_2} M_2(\cF)$.

Note that $\psi(P_1) \not\prec p_2(B_2^\cF \ovt D_2)p_2$, because otherwise we can take the relative commutant, apply \cite[Lemma 3.5]{Va07} and reach the contradiction that
$$(B_2^{\Gamma/\Gamma_0 - \cF} \ot 1)p_2 \prec M_2 \cap \psi(P_1)' = \psi(M_1 \cap P_1') = \C p_2 \; .$$
In combination with the previous paragraph and \cite[Remark 3.8]{Va07}, we find projections $q_1 \in P_1$ and $q_2 \in p_2 D_2 p_2$, a $*$-homomorphism $\theta : q_1 P_1 q_1 \recht q_2 M_2(\cF) q_2$ and a nonzero partial isometry $V \in \psi(q_1) M_2 q_2$ satisfying $\psi(b) V = V \theta(b)$ for all $b \in q_1 P_1 q_1$, and satisfying $\theta(q_1 P_1 q_1) \not\prec q_2(B_2^\cF \ovt D_2)q_2$.

The projection $VV^*$ commutes with $\psi(q_1 P_1 q_1)$ and hence must be equal to $\psi(q_1)$. The projection $V^* V$ commutes with $\theta(q_1 P_1 q_1)$. Since the action of $\Lambda_2$ on $B_2^{\Gamma/\Gamma_0 - \cF}$ is mixing, it follows from Lemma \ref{lem.mixing} that $V^* V \in q_2 M_2(\cF) q_2$. So we may assume that $V^* V = q_2$. Since $P_1$ and $M_2(\cF)$ are factors, we can then amplify $V$ to a unitary element $V \in \cU(M_2)$ satisfying $V^* \psi(P_1) V \subset M_2(\cF)$.

Since $\Gamma_0$ is not co-amenable inside $\Gamma$, it certainly has infinite index. Therefore we can find $g \in \Gamma$ such that $g \cF \cap \cF = \emptyset$ (see e.g.\ \cite[Lemma 2.4]{PV06}). Denote $Q_2 = V^* \psi(P_1) V$. So $Q_2 \subset M_2(\cF)$. Since $\al^1_g(P_1) = P_1$, it follows that the von Neumann algebras $Q_2$ and $\al^2_g(Q_2)$ are unitarily conjugate inside $M_2$. Since $Q_2 \subset M_2(\cF)$ and $\al^2_g(Q_2) \subset M_2(g \cF)$, it follows from Lemma \ref{lem.commuting} that $Q_2 \prec M_2(\emptyset) = P_2$. Reasoning as above, we find a unitary $V \in \cU(M_2)$ such that $V^* \psi(P_1) V \subset P_2$.

The same reasoning applies to $\psi^{-1}$ and we also find $W \in \cU(M_1)$ such that $W^* \psi^{-1}(P_2) W \subset P_1$. Writing $T = \psi(W) V$, we get that
\begin{equation}\label{eq.inclusions}
T^* P_2 T \subset V^* \psi(P_1) V \subset P_2 \; .
\end{equation}
Since the action of $\Lambda_2$ on $A_2$ is mixing, it follows from Lemma \ref{lem.mixing} that $T \in P_2$. But then the inclusions in \eqref{eq.inclusions} are equalities and we conclude that $V^* \psi(P_1) V = P_2$.

So after a unitary conjugacy of $\psi$, we may from now on assume that $\psi(D_1 \rtimes \Lambda_1) = D_2 \rtimes \Lambda_2$ and $\psi(p_1) = p_2$. Inside $P_2$, we must have the embedding $\psi(p_1 D_1 p_1) \prec p_2 D_2 p_2$. Indeed, since the action $\Lambda_2 \actson A_2$ is mixing and $\psi(A_1 p_1)$ commutes with $\psi(p_1 D_1 p_1)$, it would otherwise follow from Lemma \ref{lem.mixing} that $\psi(A_1 p_1) \subset P_2$ and hence $\psi(M_1) \subset P_2$, which is absurd. We have a similar embedding statement for $\psi^{-1}$.

In the case where the $D_i$ are abelian, $D_i p_i$ is a Cartan subalgebra of $P_i$. It then follows from \cite[Theorem A.1]{Po01b} that $\psi(D_1 p_1)$ can be unitarily conjugated onto $D_2 p_2$ inside $P_2$. In the case where the $D_i$ are factors and the groups $\Lambda_i$ have a torsion free FC-radical, \cite[Lemma 8.4]{IPP05} yields the same conclusion\footnote{The statement of \cite[Lemma 8.4]{IPP05} requires the groups $\Lambda_i$ to be icc, but the proof of \cite[Lemma 8.4]{IPP05} only uses the following property~: if $K < \Lambda_i$ is a finite subgroup and $H < \Lambda_i$ is a finite index subgroup such that $K$ is normal in $H$, then $K =  \{e\}$. This last property is equivalent with the torsion freeness of the FC-radical of $\Lambda_i$.}. So after a further unitary conjugacy of $\psi$, with a unitary from $D_2 \rtimes \Lambda_2$, we arrive at $\psi(D_1 \rtimes \Lambda_1) = D_2 \rtimes \Lambda_2$ and $\psi(D_1) = D_2$.

Using Lemma \ref{lem1}, we then get that $V_g \in N_2 \cap (D_2 \rtimes \Lambda_2)' = \C 1$ and hence, $\psi \circ \al^1_g = \al^2_\delta(g) \circ \psi$ for all $g \in \Gamma$.
\end{proof}

Theorem \ref{thm.B} is an immediate consequence of Lemma \ref{lem2}.

\begin{proof}[Proof of Theorem \ref{thm.B}]
Assume that $\al^{\Lambda_1}$ and $\al^{\Lambda_2}$ are outer conjugate. Then Lemma \ref{lem2} yields a $*$-isomorphism
$$\psi : M_2(\C)^{\Lambda_1} \rtimes \Lambda_1 \recht M_2(\C)^{\Lambda_2} \rtimes \Lambda_2 \quad\text{with}\quad \psi\Bigl(M_2(\C)^{\Lambda_1}\Bigr) = M_2(\C)^{\Lambda_2} \; .$$
It follows that $\Lambda_1 \cong \Lambda_2$. The converse is obvious.
\end{proof}

\section{Proof of Theorem \ref{thm.A}}\label{sec.proofA}

Theorem \ref{thm.A} will be derived as a consequence of the more general Theorem \ref{thm.tech} below.

\begin{assumptions}\label{assum}
We use, as a black box, the following kind of measure preserving automorphism $T$ of a standard nonatomic probability space $(Y,\mu)$.
\begin{enumerate}
\item $T$ is mixing.
\item The only automorphisms of $(Y,\mu)$ that commute with $T$ are the powers of $T$.
\item The automorphisms $T$ and $T^{-1}$ are not isomorphic: there is no $S \in \Aut(Y,\mu)$ satisfying $STS^{-1} = T^{-1}$.
\item Viewing $T$ as a unitary operator on $L^2(Y,\mu)$, its maximal spectral type is singular w.r.t.\ the Lebesgue measure.
\end{enumerate}
\end{assumptions}

In \cite[Theorems 2.7 and 2.8]{Ru78}, it was shown that Ornstein's rank one automorphisms of \cite{Or70} satisfy conditions 1, 2 and 3. In \cite{Bo91}, these automorphisms were proven to satisfy condition 4 as well. Note that in \cite{Ru78}, conditions 2 and 3 are deduced from a stronger property of $T$~: the mixing automorphism $T$ actually has \emph{minimal self joinings (MSJ)} in the strongest possible sense saying that the only measures on $Y \times Y$ that are invariant under $T^n \times T^m$ (with $n \neq 0$ and $m \neq 0$) and that have marginals $\mu$, are the obvious ones. In later articles, the notion of MSJ has been weakened by only considering $T \times T$ invariant measures with marginals $\mu$. We refer to \cite[Proposition 6.7]{dJR84} for a detailed discussion.

Every automorphism $T \in \Aut(Y,\mu)$ satisfying the assumptions in \ref{assum} gives rise to a mixing pmp action $\Z \actson (Y,\mu)$ that we denote by $(\gamma_n)_{n \in \Z}$ and that has the following properties~: the normalizer of $\Z$ inside $\Aut(Y,\mu)$ equals $\Z$ itself; and there is no nonzero bounded operator $L^2(Y,\mu) \recht \ell^2(\Z)$ that intertwines the unitary representation $\Z \actson L^2(Y,\mu)$ induced by $(\gamma_n)_{n \in \Z}$ with the regular representation of $\Z$.

\begin{theorem}\label{thm.tech}
Let $\Gamma$ be any fixed nonamenable group that contains a copy of $\Z$ as a malnormal\footnote{A subgroup $\Gamma_0 < \Gamma$ is said to be malnormal if $g \Gamma_0 g^{-1} \cap \Gamma_0 = \{e\}$ for all $g \in \Gamma - \Gamma_0$.} subgroup $\Gamma_0$. Fix an automorphism $T \in \Aut(Y,\mu)$ satisfying the assumptions in \ref{assum} and consider the associated action of $\Gamma_0 = \Z$ on $(Y,\mu)$. Put $(X,\mu) = (Y,\mu)^{\Gamma/\Gamma_0}$ and consider the coinduced action $\Gamma \actson (X,\mu)$ as well as the diagonal action $\Z \actson (X,\mu)$. Fix a standard nonatomic finite or infinite measure space $(Z,\eta)$.

Whenever $\Delta \in \Aut(Z,\eta)$ is ergodic and measure preserving, consider
$$N = L^\infty(X \times Z) \rtimes \Z$$
where $\Z$ acts diagonally on $X \times Z$. Consider the action $(\al^\Delta_g)_{g \in \Gamma}$ of $\Gamma$ on $N$ that extends the action $\Gamma \actson (X,\mu)$ and that is the identity on $L^\infty(Z) \rtimes \Z$.

If $\Delta_1,\Delta_2 \in \Aut(Z,\eta)$ are ergodic and measure preserving and if $\psi : N_1 \recht N_2$ is an outer conjugacy between $(\al^{\Delta_1}_g)_{g \in \Gamma}$ and $(\al^{\Delta_2}_g)_{g \in \Gamma}$, there exists a unitary $w \in \cU(N_2)$ and a group element $g_0 \in \Gamma$ such that the outer conjugacy $\psi' = \al^2_{g_0} \circ (\Ad w) \circ \psi$ is the composition of
\begin{itemize}
\item the natural $*$-isomorphism $N_1 \recht N_2$ induced by an automorphism $\theta \in \Aut(Z,\eta)$ satisfying $\theta \circ \Delta_1 \circ \theta^{-1} = \Delta_2$.
\item the natural automorphism of $N_2$ induced by an automorphism $\delta \in \Aut(\Gamma)$ satisfying $\delta(g) = g$ for all $g \in \Gamma_0$,
\end{itemize}
\end{theorem}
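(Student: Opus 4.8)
The plan is to realize the situation inside the framework of Section~3, taking $B=L^\infty(Y)$ with $\Gamma_0=\Lambda=\Z$ both acting by $T$ (these commute), and $D=L^\infty(Z)$ with the $\Z$-action generated by $\Delta$ (case~(2), since an ergodic $\Delta$ on a nonatomic space is essentially free), so that $N=L^\infty(X\times Z)\rtimes\Z$ is exactly the algebra of the theorem and $\Gamma_0$ is malnormal, hence not co-amenable in $\Gamma$. Write $u_i$ for the canonical unitary of $N_i$ and $A_i=L^\infty(X)$, $D_i=L^\infty(Z_i)$. The first step is to apply Lemma~\ref{lem2}: after replacing $\psi$ by $(\Ad w)\circ\psi$ we obtain $\psi(D_1\rtimes\Z)=D_2\rtimes\Z$, $\psi(D_1)=D_2$, and $\psi\circ\al^1_g=\al^2_{\delta(g)}\circ\psi$ for some $\delta\in\Aut(\Gamma)$. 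It then remains to (a)~identify $\delta(\Gamma_0)$ up to conjugacy, so that after a further twist by an $\al^2_{g_0}$ we may assume $\delta(\Gamma_0)=\Gamma_0$, whence $\delta|_{\Gamma_0}=\pm\id$; (b)~force the sign to be $+$; and (c)~promote the orbit-equivalence data hidden in $\psi|_{D_1\rtimes\Z}$ to an honest conjugacy $\theta\Delta_1\theta^{-1}=\Delta_2$, and recognize $\psi$ as the announced composition.

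For (a), put $N_i^e:=(\pi_{e\Gamma_0}(L^\infty(Y))\ovt D_i)\rtimes\Z$. By malnormality, Lemma~\ref{lem.regular} applies with $\Gamma_1=\Gamma_0$, $J=\{e\Gamma_0\}$, and gives that $(\al^1_g)_{g\in\Gamma_0}$ on $L^2(N_1)\ominus L^2(N_1^e)$ is a multiple of the regular representation of $\Gamma_0$; on $L^2(N_1^e)$ it is a direct sum of copies of the trivial representation and of the $T$-representation on $L^2(Y)\ominus\C 1$, which by Assumption~4 has Lebesgue-singular maximal spectral type, hence contains no regular subrepresentation. Therefore $L^2(N_1^e)$ is precisely the non-regular part of the representation, so $N_1^e$ is intrinsic and $\psi(N_1^e)=N_2^e$. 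Moreover the $T$-representation on $L^2(Y)\ominus\C 1$, being nonzero, non-trivial (as $T$ is ergodic) and Lebesgue-singular, survives under $\psi$ as a nonzero non-trivial non-regular subrepresentation of $(\al^2_h)_{h\in\delta(\Gamma_0)}$. Analyzing the coinduced action for the subgroup $\delta(\Gamma_0)$ — using malnormality again to see that all contributions of non-singleton finite subsets of $\Gamma/\Gamma_0$, and of subsets on which $\delta(\Gamma_0)$ has trivial point-stabilizer, are multiples of the regular representation — such a subrepresentation can only come from a coset $g_0\Gamma_0$ with $\delta(\Gamma_0)\cap g_0\Gamma_0 g_0^{-1}\neq\{e\}$. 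Since both $\Gamma_0$ and $\delta(\Gamma_0)$ are malnormal abelian, a short computation (a subgroup of $\delta(\Gamma_0)$ meeting $g_0\Gamma_0 g_0^{-1}$ nontrivially forces $\delta(\Gamma_0)\subseteq g_0\Gamma_0 g_0^{-1}$ by malnormality of $g_0\Gamma_0 g_0^{-1}$, and then equality by malnormality of $\delta(\Gamma_0)$) gives $\delta(\Gamma_0)=g_0\Gamma_0 g_0^{-1}$. Replacing $\psi$ by $\al^2_{g_0}\circ\psi$, we may assume $\delta(\Gamma_0)=\Gamma_0$, so $\delta|_{\Gamma_0}=\pm\id$.

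For (b) and (c): $N_i^e=L^\infty(Y\times Z_i)\rtimes_{T\times\Delta_i}\Z$, with Cartan subalgebra $L^\infty(Y\times Z_i)$, with $D_i$ the fixed algebra of $\al^i_{g_0}$ inside it, and with $\al^i_{g_0}$ acting on $N_i^e$ (by construction of the coinduced action) as the automorphism induced by the transformation $T\times\id$ of $Y\times Z_i$ while fixing $u_i$. Since $\psi$ maps Cartan to Cartan and intertwines $\al^1_{g_0}$ with $\al^2_{\delta(g_0)}$, its restriction to $L^\infty(Y\times Z_1)$ conjugates $T\times\id_{Z_1}$ with $(T\times\id_{Z_2})^{\pm1}$; disintegrating over the invariant subalgebra $L^\infty(Z_i)$ and passing to the quotient gives $T\cong T^{\pm1}$, so by Assumption~3 the sign is $+$ and $\delta|_{\Gamma_0}=\id$. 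Disintegrating once more and using Assumption~2, $\psi|_{L^\infty(Y\times Z_1)}$ is induced by a transformation of the form $(y,z)\mapsto(T^{l(z)}y,\mu(z))$ for a measure isomorphism $\mu:Z_1\to Z_2$ and a measurable $l:Z_1\to\Z$. Computing $\Ad\psi(u_1)$ on the Cartan in two ways — as $\psi\circ(T\times\Delta_1)\circ\psi^{-1}$, and from $\psi(u_1)\in D_2\rtimes\Z$ (which forces it to restrict to a ``diagonal'' full-group element $(y,z)\mapsto(T^{r(z)}y,\Delta_2^{r(z)}z)$) — yields $\mu\Delta_1\mu^{-1}=\Delta_2^{\,r}$ with $r=1+(l\circ\Delta_1-l)\circ\mu^{-1}$; that the constant here is exactly $1$ follows by applying $\psi$ to the intrinsic identity $\pi_{e\Gamma_0}(L^\infty(Y))=L^\infty(Y\times Z_i)\cap\{\,(\Ad u_i)\circ(\al^i_{g_0})^{-1}\text{-fixed elements}\,\}$ and comparing exponents, since $\al^2_{g_0}$ acts on the image (a $\Z$-twist of $L^\infty(Y)$) exactly as translation by $T^{1}$. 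Then $\theta:=\Delta_2^{\,-l\circ\mu^{-1}}\circ\mu\in\Aut(Z,\eta)$ satisfies $\theta\circ\Delta_1=\Delta_2\circ\theta$, i.e.\ $\theta\Delta_1\theta^{-1}=\Delta_2$.

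Finally, one more conjugacy of $\psi$ by a unitary in $D_2\rtimes\Z$ — which commutes with every $\al^2_g$ and so leaves the intertwining intact — brings $\psi$ to the form with $\psi(u_1)=u_2$ and $\psi|_{D_1}$ the isomorphism induced by $\theta$; then $(\Phi_\delta\circ\Psi_\theta)^{-1}\circ\psi$ is an automorphism of $N_1$ fixing $D_1\rtimes\Z$ pointwise and commuting with $(\al^1_g)_{g\in\Gamma}$, which one checks can be absorbed into the (non-unique) choice of $\delta$ among automorphisms restricting to $\id$ on $\Gamma_0$, giving $\psi=\Phi_\delta\circ\Psi_\theta$; here $\Psi_\theta:N_1\to N_2$ and $\Phi_\delta\in\Aut(N_2)$ denote the natural maps attached to $\theta$ and $\delta$. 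The main obstacle is step (c) — upgrading the orbit-equivalence information in $\psi|_{D_1\rtimes\Z}$ to a genuine conjugacy of $\Delta_1$ with $\Delta_2$: this is exactly the place where Assumptions~\ref{assum}(1)--(4) on $T$ are all indispensable, and where one must carefully exploit that the crossed-product $\Z$ and the subgroup $\Gamma_0$ act ``in lockstep'' on the $e\Gamma_0$-tensor factor of $L^\infty(X)$; the group-theoretic pin-down of $\delta(\Gamma_0)$ in step (a) is the other substantial technical point.
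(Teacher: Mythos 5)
Your proposal is correct and follows essentially the same route as the paper: reduce via Lemma~\ref{lem2}, use Lemma~\ref{lem.regular} together with Assumption~\ref{assum}(4) to locate $\delta(\Gamma_0)$ as a conjugate of $\Gamma_0$ and to show $\psi$ preserves $(\pi_{e\Gamma_0}(B)\ovt D)\rtimes\Z$, then disintegrate over $Z$ and invoke Assumptions~\ref{assum}(2)--(3) to get $\delta|_{\Gamma_0}=\id$ and the fiberwise powers $T^{l(z)}$, and finally run the cocycle computation producing $\theta$ with $\theta\Delta_1\theta^{-1}=\Delta_2$. The only deviations are cosmetic (a direct malnormality argument for $\delta(\Gamma_0)=g_0\Gamma_0g_0^{-1}$ in place of the paper's two-inclusion argument, and a Cartan/groupoid phrasing of the sign determination), and the one premature claim ``$\psi(N_1^e)=N_2^e$'' before the $g_0$-twist is harmless since your subsequent argument only uses the correct weaker statement.
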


We say that actions $(\al^i_g)_{g \in \Gamma}$, $i=1,2$, of a group $\Gamma$ on von Neumann algebras $N_i$ are \emph{isomorphic} if there exists a $*$-isomorphism $\psi : N_1 \recht N_2$ such that $\al^2_g \circ \psi = \psi \circ \al^1_g$ for all $g \in \Gamma$. We also use the notations $\module(\psi)$ and $\module(\theta)$ to denote the scaling factor of a trace scaling automorphism $\psi \in \Aut(N)$, or a measure scaling automorphism $\theta \in \Aut(Z,\eta)$.

Within the context of Theorem \ref{thm.tech}, we then also have the following results.
\begin{align*}
& \{\module(\psi) \mid \psi \in \Aut(N) \;\;\text{is an outer conjugacy of}\;\; \al^\Delta\}
\\ = \; & \{\module(\psi) \mid \psi \in \Aut(N) \;\;\text{commutes with}\;\; \al^\Delta\}
\\ = \; & \{\module(\theta) \mid \theta \in \Aut(Z,\eta) \;\;\text{commutes with}\;\; \Delta\} \; .
\end{align*}

If $\Delta_1,\Delta_2 \in \Aut(Z,\eta)$ are ergodic and measure preserving, then the following statements are equivalent.
\begin{itemize}
\item $\Delta_1$ is conjugate with $\Delta_2$~: there exists a nonsingular automorphism $\theta \in \Aut(Z,\eta)$ such that $\Delta_2 = \theta \circ \Delta_1 \circ \theta^{-1}$ a.e.
\item The actions $(\al^{\Delta_1}_g)_{g \in \Gamma}$ and $(\al^{\Delta_2}_g)_{g \in \Gamma}$ are isomorphic.
\item The actions $(\al^{\Delta_1}_g)_{g \in \Gamma}$ and $(\al^{\Delta_2}_g)_{g \in \Gamma}$ are outer conjugate.
\end{itemize}

\begin{proof}[Proof of Theorem \ref{thm.tech}]
We start by making the notations compatible with those at the beginning of Section \ref{sec.outer-conj}. We denote $B = L^\infty(Y,\mu)$ and $A = B^{\Gamma/\Gamma_0} = L^\infty(X,\mu)$. For every $g \Gamma_0 \in \Gamma/\Gamma_0$, we denote by $\pi_{g \Gamma_0} : B \recht A$ the embedding of $B$ as the $g\Gamma_0$-th tensor factor of $A = B^{\Gamma/\Gamma_0}$. We have $\Gamma_0 = \Z = \Lambda$ and the actions $(\beta_g)_{g \in \Gamma_0}$ and $(\gamma_g)_{g \in \Gamma_0}$ on $B$ are equal and both induced by $T$.

We write $D = L^\infty(Z,\eta)$. The ergodic measure preserving automorphisms $\Delta_1,\Delta_2 \in \Aut(Z,\eta)$ induce essentially free and ergodic actions $(\gamma^i_n)_{n \in \Z}$ of $\Lambda = \Z$ on $D$. We consider $N_1,N_2$ as in the formulation of the Theorem, but we denote the actions by $(\al^i_g)_{g \in \Gamma}$ rather than $(\al^{\Delta_i}_g)_{g \in \Gamma}$.
Assume that $\psi : N_1 \recht N_2$ is an outer conjugacy between $(\al^1_g)_{g \in \Gamma}$ and $(\al^2_g)_{g \in \Gamma}$.

By Lemma \ref{lem2}, and after replacing $\psi$ by $(\Ad w) \circ \psi$, we may assume that
\begin{equation}\label{eq.ok}
\psi(D \rtimes_{\gamma^1} \Z) = D \rtimes_{\gamma^2} \Z \quad , \quad \psi(D) = D \quad\text{and}\quad \psi \circ \al^1_g = \al^2_{\delta(g)} \circ \psi
\end{equation}
for all $g \in \Gamma$ and some automorphism $\delta \in \Aut(\Gamma)$. Taking the relative commutant of $\psi(D) = D$, we also have that $\psi(A \ovt D) = A \ovt D$.

We prove now the existence of a $g_0 \in \Gamma$ such that $g_0 \delta(\Gamma_0) g_0^{-1} \cap \Gamma_0 \neq \{e\}$. If such a $g_0$ does not exist, it follows from Lemma \ref{lem.regular} that the unitary representation $(\al^2_{\delta(g)})_{g \in \Gamma_0}$ on $L^2(N_2) \ominus L^2(D \rtimes_{\gamma^2} \Z)$ is a multiple of the regular representation of $\Gamma_0$. On the other hand, by condition 4 in \ref{assum}, the unitary representation $\Gamma_0 \actson L^2(\pi_{e \Gamma_0}(B) \ot D)$ given by $(\al^1_g)_{g \in \Gamma_0}$ is disjoint from the regular representation of $\Gamma_0$. Combining both observations, it follows that $\psi(\pi_{e \Gamma_0}(B) \ot D) \subset D \rtimes_{\gamma^2} \Z$. Since $\psi \circ \al^1_g = \al^2_{\delta(g)} \circ \psi$ for all $g \in \Gamma$ and using \eqref{eq.ok}, we arrive at the contradiction that $\psi(N_1) \subset D \rtimes_{\gamma^2} \Z$. So there indeed exists a $g_0 \in \Gamma$ such that $g_0 \delta(\Gamma_0) g_0^{-1} \cap \Gamma_0 \neq \{e\}$.

After replacing $\psi$ by $\al^2_{g_0} \circ \psi$ and $\delta$ by $(\Ad g_0) \circ \delta$, we may assume that $\delta(\Gamma_0) \cap \Gamma_0 \neq \{e\}$ and we find $g_1 \in \Gamma_0$ with $g_1 \neq e$ and $\delta(g_1) \in \Gamma_0$. Since $\Gamma_0$ is abelian, it follows that $\delta^{-1}(\Gamma_0)$ commutes with $g_1$. By malnormality of $\Gamma_0 < \Gamma$, we conclude that $\delta^{-1}(\Gamma_0) \subset \Gamma_0$. Applying $\delta$, it follows that $\Gamma_0 \subset \delta(\Gamma_0)$. Since $\delta(\Gamma_0)$ is abelian and $\Gamma_0 < \Gamma$ is malnormal, we find that $\Gamma_0 = \delta(\Gamma_0)$.

For $i=1,2$, define $K_i = L^2\bigl((\pi_{e \Gamma_0}(B) \ovt D) \rtimes_{\gamma^i} \Z\bigr)$ and $L_i = L^2(N_i) \ominus K_i$. Since $\Gamma_0 < \Gamma$ is malnormal, it follows from Lemma \ref{lem.regular} that the unitary representation $(\al^2_{\delta(g)})_{g \in \Gamma_0}$ of $\Gamma_0$ on $L_2$ is a multiple of the regular representation of $\Gamma_0$. By condition~4 in \ref{assum}, the unitary representation $(\al^1_g)_{g \in \Gamma_0}$ of $\Gamma_0$ on $K_1$ is disjoint from the regular representation of $\Gamma_0$. Combining both observations, it follows that $\psi(K_1) \subset K_2$. By symmetry, also the converse inclusion holds. So
$$\psi\bigl( (\pi_{e \Gamma_0}(B) \ovt D) \rtimes_{\gamma^1} \Z \bigr) = (\pi_{e \Gamma_0}(B) \ovt D) \rtimes_{\gamma^2} \Z \; .$$
We therefore find the $*$-isomorphism
$$\Psi : (B \ovt D) \rtimes_{\gamma^1} \Z \recht (B \ovt D) \rtimes_{\gamma^2} \Z$$
satisfying $(\pi_{e\Gamma_0} \ot \id) \circ \Psi = \psi \circ (\pi_{e\Gamma_0} \ot \id)$. Because of \eqref{eq.ok} and the facts that $\psi(A \ovt D) = A \ovt D$ and $\psi(D) = D$, we have
$$\Psi(B \ovt D) = B \ovt D \quad , \quad \Psi(D \rtimes_{\gamma^1} \Z) = D \rtimes_{\gamma^2} \Z \quad\text{and}\quad \Psi(D) = D \; .$$

Since $D = L^\infty(Z,\eta)$, we obtain the nonsingular automorphism $\theta \in \Aut(Z,\eta)$ satisfying $\Psi(d) = d \circ \theta^{-1}$ for all $d \in L^\infty(Z,\eta)$. Since
$\Psi(D \rtimes_{\gamma^1} \Z) = D \rtimes_{\gamma^2} \Z$, the map $\theta$ is an orbit equivalence between the essentially free actions $\gamma^i : \Z \actson (Z,\eta)$. So we find the $1$-cocycle $\om : \Z \times Z \recht \Z$ satisfying $\theta(n \cdot z) = \om(n,z) \cdot \theta(z)$ for all $n \in \Z$ and a.e.\ $z \in Z$. For all $n,m \in \Z$, denote by $p_{n,m} \in L^\infty(Z)$ the projection with support $\{z \in Z \mid \om(n,\theta^{-1}(z)) = m \}$.
Denote by $(u_n)_{n \in \Z}$ the canonical unitaries in the crossed product $D \rtimes_{\gamma^i} \Z$. Then,
\begin{equation}\label{eq.formula-us}
\Psi(u_n) = \sum_{m \in \Z} u_m \, \mu_{n,m}
\end{equation}
for all $n \in \Z$, where $\mu_{n,m} \in \cU(D p_{n,m})$.

We identify $B \ovt D = L^\infty(Z,B)$. Since the automorphism $\Psi \in \Aut(B \ovt D)$ satisfies $\Psi(D) = D$, we find a measurable family of automorphisms $\psi_z \in \Aut(B)$, for a.e.\ $z \in Z$, such that
$$(\Psi(b))(\theta(z)) = \psi_z(b(z)) \quad\text{for all}\;\; b \in L^\infty(Z,B) \;\;\text{and a.e.}\; z \in Z \; .$$
The $*$-isomorphism $\psi$ scales the trace by a scaling factor $\lambda > 0$. Thus $\theta$ scales the measure by the same factor $\lambda$ and for a.e.\ $z \in Z$, the automorphism $\psi_z$ is trace preserving.

For all $n \in \Z$ and $b \in B$, we have
$$\Psi(\gamma_{-n}(b) \ot 1) = \Psi(u_n^* (b \ot 1) u_n) = \Psi(u_n)^* \, \Psi(b \ot 1) \, \Psi(u_n) \; .$$
The left and right hand side both belong to $L^\infty(Z,B)$. Evaluating the left and right hand side in a point $\theta(z)$ for some $z \in Z$ with $\om(n,z) = m$ and using \eqref{eq.formula-us}, we obtain the equality
$$\psi_z(\gamma_{-n}(b)) = \gamma_{-m}(\psi_{n \cdot z}(b)) \; .$$
It follows that
\begin{equation}\label{eq.commutation-rel}
\psi_{n \cdot z} \circ \gamma_n = \gamma_{\om(n,z)} \circ \psi_z
\end{equation}
for all $n \in \Z$ and a.e.\ $z \in Z$.

Since $\psi \circ \al^1_g = \al^2_{\delta(g)} \circ \psi$ for all $g \in \Gamma$, it follows that $\Psi \circ \beta_g = \beta_{\delta(g)} \circ \Psi$ for all $g \in \Gamma_0$. This means that $\psi_z \circ \beta_g = \beta_{\delta(g)} \circ \psi_z$ for all $g \in \Gamma_0$ and a.e.\ $z \in Z$. By conditions 2 and 3 in \ref{assum}, it follows that $\delta(g) = g$ for all $g \in \Gamma_0$ and that a.e.\ $\psi_z$ is given by a power of $T$. So we find a measurable map $\vphi : Z \recht \Z$ such that $\psi_z = \gamma_{\vphi(z)}$ for a.e.\ $z \in Z$.

Writing $\om'(n,z) = \vphi(n \cdot z)^{-1} \, \om(n,z) \, \vphi(z)$, it then follows from \eqref{eq.commutation-rel} that $\gamma_{\om'(n,z)} = \gamma_n$. We conclude that $\om'(n,z) = n$ for all $n \in \Z$ and a.e.\ $z \in Z$. It follows that the map $z \mapsto \vphi(z)^{-1} \cdot \theta(z)$ is an automorphism of $(Z,\eta)$. Denoting, for all $m \in \Z$, by $q_m \in L^\infty(Z)$ the projection with support $\{z \in Z \mid \vphi(\theta^{-1}(z)) = m\}$, it follows that
$$U = \sum_{m \in \Z} q_m u_m$$
is a well defined unitary operator in $D \rtimes_{\gamma^2} \Z$. Replacing $\psi$ by $(\Ad U^*) \circ \psi$, we get that
$$\psi((\pi_{e \Gamma_0}(b) \ot d)u_n) = (\pi_{e \Gamma_0}(b) \ot \theta(d))u_n$$
for all $b \in B$, $d \in D$, $n \in \Z$, where $\theta \in \Aut(D)$ satisfies $\theta \circ \gamma^1_n = \gamma^2_n \circ \theta$ for all $n \in \Z$. We still have that
$\psi \circ \al^1_g = \al^2_{\delta(g)} \circ \psi$ for all $g \in \Gamma$, where $\delta \in \Aut(\Gamma)$ satisfies $\delta(g) = g$ for all $g \in \Gamma_0$.

So $\psi$ is indeed the composition of the two isomorphism described in the statement of the theorem.
\end{proof}

\begin{remark}
Fix the same actions $\Gamma \actson (X,\mu)$ and $\Z \actson (X,\mu)$ as in Theorem \ref{thm.tech}. Whenever $(\eta_n)_{n \in \Z}$ is an outer action of $\Z$ on the hyperfinite II$_1$ factor $R$, we consider, in the same way as in Theorem \ref{thm.tech}, the action $(\al^\eta_g)_{g \in \Gamma}$ of $\Gamma$ on
$$(L^\infty(X) \ovt R) \rtimes_\eta \Z \; .$$
Contrary to the situation in Theorem \ref{thm.tech}, where we use the abelian algebra $L^\infty(Z)$ instead of $R$, this construction is of little interest since all the actions $(\al^\eta_g)_{g \in \Gamma}$ are isomorphic. Indeed, take two outer actions $(\eta_n)_{n \in \Z}$ and $(\eta'_n)_{n \in \Z}$ of $\Z$ on $R$. By \cite[Theorem 2]{Co75}, the automorphisms $\eta_1$ and $\eta'_1$ are outer conjugate. So we find $\psi_0 \in \Aut(R)$ and a unitary $v_1 \in \cU(R)$ such that $\psi_0 \circ \eta_1 = \Ad v_1 \circ \eta'_1 \circ \psi_0$. Denoting by $(u_n)_{n \in \Z}$ the canonical generating unitaries of $L(\Z)$, one checks that there is a unique $*$-isomorphism
$$\psi : (L^\infty(X) \ovt R) \rtimes_\eta \Z \recht (L^\infty(X) \ovt R) \rtimes_{\eta'} \Z$$
satisfying $\psi(a \ot b) = a \ot \psi_0(b)$ for all $a \in L^\infty(X)$, $b \in R$ and $\psi(u_1) = (1 \ot v_1) u_1$. By construction, $\psi \circ \al^\eta_g = \al^{\eta'}_g \circ \psi$ for all $g \in \Gamma$.
\end{remark}

We are now ready to prove Theorem \ref{thm.A}.

\begin{proof}[Proof of Theorem \ref{thm.A}]
We apply Theorem \ref{thm.tech} to the group $\Gamma = (\Z / n \Z) * (\Z / m\Z)$. Denote by $a \in \Z/n \Z$ and $b \in \Z / m\Z$ the cyclic generators. Define $\Gamma_0 \cong \Z$ as the subgroup of $\Gamma$ generated by $ab$. By \cite[Example 7.C]{dlHW11}, $\Gamma_0$ is a malnormal subgroup of $\Gamma$. For every ergodic measure preserving automorphism $\Delta$ of a standard nonatomic finite or infinite measure space $(Z,\eta)$, Theorem \ref{thm.tech} provides the outer action $(\al^\Delta_g)_{g \in \Gamma}$ of $\Gamma$ on $N = L^\infty(X \times Z) \rtimes \Z$.

To prove the first statement of Theorem \ref{thm.A}, take $H \in \cS$. By \cite[Theorem 4.3]{Aa86}, there exists an ergodic measure preserving automorphism $\Delta$ of a standard infinite measure space $(Z,\eta)$ such that
\begin{equation}\label{eq.hulp}
H = \{\module(\theta) \mid \theta \in \Aut(Z,\eta) \;\;\text{commutes with}\;\; \Delta \} \; .
\end{equation}
Fix a nonzero projection $p \in L^\infty(Z)$ of finite trace and realize the hyperfinite II$_1$ factor $R$ as $pNp$. We still denote by $\al^\Delta$ the restriction of $\al^\Delta$ to $R = pNp$. We consider the associated group-type subfactor $S(\al^\Delta)$. We claim that $\cF(S(\al^\Delta)) = H$.

First assume that $\lambda \in \cF(S(\al^\Delta))$. By \cite[Theorem 3.2]{BNP06}, we find a projection $q \in L^\infty(Z)$ with $\Tr(q) = \lambda \Tr(p)$ and a $*$-isomorphism $\psi : pNp \recht qNq$ such that $\psi$ is an outer conjugacy between the restrictions of $\al^\Delta$ to $pNp$, resp.\ $qNq$. We can then amplify $\psi$ to an outer conjugacy $\psi \in \Aut(N)$ of $\al^\Delta$ scaling the trace by the module $\lambda$. Combining the remarks after Theorem \ref{thm.tech} with formula \eqref{eq.hulp}, we conclude that $\lambda \in H$.

Conversely, if $\lambda \in H$, we find by \eqref{eq.hulp} an automorphism $\psi \in \Aut(N)$ that commutes with the action $\al^\Delta$ and that scales the trace by the module $\lambda$. Put $q = \psi(p)$. Then $q$ is an $\al^\Delta$-invariant projection in $N$ with $\Tr(q) = \lambda \Tr(p)$ and $\psi$ induces an isomorphism between the subfactors
$$(pNp)^{\Z/n\Z} \subset (pNp) \rtimes \Z/m\Z \qquad\text{and}\qquad (qNq)^{\Z / n \Z} \subset (qNq) \rtimes \Z/m\Z \; .$$
This precisely means that $\lambda \in \cF(S(\al^\Delta))$.

To prove the second statement of Theorem \ref{thm.A}, we take for $(Z,\eta)$ a standard nonatomic probability space. For every ergodic pmp automorphism $\Delta \in \Aut(Z,\eta)$, Theorem
\ref{thm.tech} provides an outer action $(\al^\Delta_g)_{g \in \Gamma}$ on the hyperfinite II$_1$ factor $R = L^\infty(X \times Z) \rtimes \Z$. We denote by $S(\al^\Delta)$ the associated group-type subfactor.

If the subfactors $S(\al^{\Delta_1})$ and $S(\al^{\Delta_2})$ are isomorphic, it follows from \cite[Theorem 3.2]{BNP06} that the actions $\al^{\Delta_1}$ and $\al^{\Delta_2}$ are outer conjugate. It then follows from Theorem \ref{thm.tech} that $\Delta_1,\Delta_2$ are conjugate inside $\Aut(Z,\eta)$. Conversely, when $\Delta_1,\Delta_2$ are conjugate inside $\Aut(Z,\eta)$, the actions $\al^{\Delta_1}$, $\al^{\Delta_2}$ are isomorphic and hence, the subfactors $S(\al^{\Delta_1})$, $S(\al^{\Delta_2})$ are isomorphic.

Finally note that by \cite[Theorem 5.1]{BDG08}, the standard invariant of the subfactor $R^H \subset R \rtimes K$ only depends on the inclusions $H,K \subset \Gamma$.
\end{proof}

As explained in the introduction, we also provide as a corollary of Theorem \ref{thm.tech}, the following new explicit construction of II$_1$ factors with a prescribed fundamental group in the family $\cS$ of \cite[Section 2]{PV08}.

\begin{corollary} \label{cor.fund-group-II1}
Let $\Gamma$ be a nonamenable, weakly amenable, bi-exact, icc group containing a copy of $\Z$ as a malnormal subgroup $\Gamma_0 < \Gamma$, e.g.\ take $\Gamma=\F_2 = \Z * \Z$ with $\Gamma_0$ given by the first copy of $\Z$. For every ergodic measure preserving automorphism $\Delta$ of a standard infinite measure space $(Z,\eta)$, consider the action $(\al^\Delta_g)_{g \in \Gamma}$ of $\Gamma$ on $N = L^\infty(X \times Z) \rtimes \Z$ as in Theorem \ref{thm.tech}. Fix a projection $p \in L^\infty(Z)$ with $\Tr(p) < \infty$.

The fundamental group of the II$_1$ factor $pNp \rtimes \Gamma$ is given by $\module\bigl(\Centr_{\Aut(Z,\eta)}(\Delta)\bigr)$.
\end{corollary}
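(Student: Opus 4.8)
The plan is to reduce the computation of the fundamental group of $M := pNp \rtimes \Gamma$ to Theorem \ref{thm.tech}, using the structural results of \cite{PV11,PV12} to normalize an arbitrary trace scaling automorphism. Write $\tilde M = N \rtimes_{\al^\Delta} \Gamma$. Since $N$ is an amenable II$_\infty$ factor and the action $\al^\Delta$ is outer, $\tilde M$ is a II$_\infty$ factor and $M = p \tilde M p$. By the standard correspondence between the fundamental group of a II$_1$ factor and the trace scaling automorphisms of the associated II$_\infty$ factor, we have $\cF(M) = \{\module(\Theta) \mid \Theta \in \Aut(\tilde M) \text{ scales the trace}\}$, so it suffices to identify this set with $\module\bigl(\Centr_{\Aut(Z,\eta)}(\Delta)\bigr)$.

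The inclusion $\supseteq$ is elementary. Given $\theta \in \Centr_{\Aut(Z,\eta)}(\Delta)$, the relation $\theta \circ \Delta = \Delta \circ \theta$ shows that $\theta$ induces a well defined automorphism $\hat\theta$ of $N = L^\infty(X \times Z) \rtimes \Z$ that is the identity on $L^\infty(X)$ and on the canonical unitary generating $\Z$ and that acts as $\theta$ on $L^\infty(Z)$; it scales the trace by $\module(\theta)$. Since $\al^\Delta$ only acts nontrivially on the $L^\infty(X)$-tensor factor and is the identity on $L^\infty(Z) \rtimes \Z$, the automorphism $\hat\theta$ commutes with $\al^\Delta$ and hence extends to a trace scaling automorphism $\hat\theta \rtimes \id$ of $\tilde M$ of module $\module(\theta)$. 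Thus $\module(\theta) \in \cF(M)$.

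For the inclusion $\subseteq$, fix $\Theta \in \Aut(\tilde M)$ scaling the trace by $\lambda$. The subalgebra $\Theta(N) \subset \tilde M$ is amenable and regular, and its normalizer generates the nonamenable factor $\tilde M$. Since $\Gamma$ is weakly amenable and bi-exact and $N$ is amenable, the dichotomy theorem of \cite{PV11,PV12}, in the semifinite formulation suited to the crossed product $\tilde M = N \rtimes \Gamma$, yields $\Theta(N) \prec_{\tilde M} N$; applying it to $\Theta^{-1}$ gives $N \prec_{\tilde M} \Theta(N)$ as well. As $N$ and $\Theta(N)$ are regular with trivial relative commutant and $\Theta$ restricts to an isomorphism between them, the intertwining results of \cite{PV11,PV12} together with a standard argument show that $\Theta(N)$ is unitarily conjugate to $N$ inside $\tilde M$. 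After replacing $\Theta$ by $(\Ad u) \circ \Theta$ for a suitable $u \in \cU(\tilde M)$ we may assume $\Theta(N) = N$. Then $\Theta$ carries $\cN_{\tilde M}(N)$ onto itself, so, $\al^\Delta$ being outer on the factor $N$, there are $\delta \in \Aut(\Gamma)$ and a $1$-cocycle $(w_g)_{g \in \Gamma}$ in $\cU(N)$ with $\Theta(v_g) = w_g v_{\delta(g)}$ for the canonical unitaries $v_g$; hence $\Psi := \Theta|_N \in \Aut(N)$ is an outer conjugacy of $\al^\Delta$ with itself that scales the trace by $\lambda$. Applying Theorem \ref{thm.tech} with $\Delta_1 = \Delta_2 = \Delta$ and $\psi = \Psi$ yields $w \in \cU(N)$, $g_0 \in \Gamma$, an automorphism $\delta' \in \Aut(\Gamma)$ fixing $\Gamma_0$ pointwise, and $\theta \in \Aut(Z,\eta)$ with $\theta \circ \Delta \circ \theta^{-1} = \Delta$, such that $\al^\Delta_{g_0} \circ (\Ad w) \circ \Psi$ is the composition of the natural isomorphism of $N$ induced by $\theta$ and the trace preserving automorphism of $N$ induced by $\delta'$. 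Comparing the modules of the two sides gives $\lambda = \module(\Psi) = \module(\theta) \in \module\bigl(\Centr_{\Aut(Z,\eta)}(\Delta)\bigr)$, finishing the proof.

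The hard part is the inclusion $\subseteq$, and in particular the step identifying $\Theta(N)$ with $N$ up to unitary conjugacy; this is where the weak amenability and bi-exactness of $\Gamma$ are used, through \cite{PV11,PV12}, and the only genuine care required is to invoke those results in the semifinite setting of the II$_\infty$ factor $\tilde M$ rather than in the usual finite one --- either via a semifinite formulation or after reducing to the finite corners $p \tilde M p$ and $\Theta(p) \tilde M \Theta(p)$ and gluing the resulting unitary conjugacy back to an automorphism of $\tilde M$. Once $\Theta(N) = N$ has been arranged, the remainder is a direct application of Theorem \ref{thm.tech} and routine bookkeeping of modules.
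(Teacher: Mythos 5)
Your proposal is correct and takes essentially the same route as the paper: the paper realizes a given $\lambda \in \cF(pNp \rtimes \Gamma)$ directly as a $*$-isomorphism $pNp \rtimes \Gamma \recht qNq \rtimes \Gamma$ with $q \in L^\infty(Z)$, $\Tr(q) = \lambda \Tr(p)$ (the finite-corner reduction you mention as a fallback, which sidesteps any semifinite version of \cite{PV11,PV12}), then applies \cite[Theorem 1.4]{PV12} together with \cite[Lemma 8.4]{IPP05} to conjugate the amenable regular core onto $qNq$, and finally feeds the resulting outer conjugacy of $\al^\Delta$ into Theorem \ref{thm.tech}, exactly as you do. The converse inclusion is handled in the paper just as in your second paragraph.
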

\begin{proof}
First assume that $\lambda > 0$ belongs to the fundamental group of $pNp \rtimes \Gamma$. We can then take a projection $q \in L^\infty(Z)$ with $\Tr(q) = \lambda \Tr(p)$ and a $*$-isomorphism $\psi : pNp \rtimes \Gamma \recht qNq \rtimes \Gamma$. By \cite[Theorem 1.4]{PV12}, we have $\psi(pNp) \prec qNq$ and $qNq \prec \psi(pNp)$. It then follows from \cite[Lemma 8.4]{IPP05} that $\psi(pNp)$ and $qNq$ are unitarily conjugate. So $\psi$ defines a cocycle conjugacy of the action $(\al^\Delta_g)_{g \in \Gamma}$ scaling the trace by the module $\lambda$. By Theorem \ref{thm.tech}, we have that $\lambda = \module(\theta)$ for some $\theta \in \Aut(Z,\eta)$ that commutes with $\Delta$.

Conversely, every $\theta \in \Aut(Z,\eta)$ that commutes with $\Delta$ defines an automorphism of $N$ that commutes with $\al^\Delta$ and hence extends to an automorphism of $N \rtimes \Gamma$ scaling the trace by the module $\lambda$. It follows that $\module(\theta)$ belongs to the fundamental group of $pNp \rtimes \Gamma$.
\end{proof}


\begin{thebibliography}{ABC90}\setlength{\itemsep}{-1mm} \setlength{\parsep}{0mm} \small

\bibitem[Aa86]{Aa86} {J. Aaronson}, The intrinsic normalising constants of transformations preserving infinite measures. {\it J. Analyse Math.} {\bf 49} (1987), 239-270.

\bibitem[BDG08]{BDG08} {D. Bisch, P. Das and S.K. Ghosh}, The planar algebra of group-type subfactors. {\it J. Funct. Anal.} {\bf 257} (2009), 20-46.

\bibitem[BH95]{BH95} {D. Bisch and U. Haagerup}, Composition of subfactors: new examples of infinite depth subfactors. {\it Ann. Sci. \'{E}cole Norm. Sup. (4)} {\bf 29} (1996), 329-383.

\bibitem[BNP06]{BNP06} {D. Bisch, R. Nicoara and S. Popa}, Continuous families of hyperfinite subfactors with the same standard invariant. {\it Internat. J. Math.} {\bf 18} (2007), 255-267.

\bibitem[Bo91]{Bo91} {J. Bourgain}, On the spectral type of Ornstein's class one transformations. {\it Israel J. Math.} {\bf 84} (1993), 53-63.

\bibitem[dlHW11]{dlHW11} {P. de la Harpe and C. Weber}, Malnormal subgroups and Frobenius groups: basics and examples. With an appendix by D. Osin. {\it Preprint.} {\tt arXiv:1104.3065}

\bibitem[Co75]{Co75} {A. Connes}, Outer conjugacy classes of automorphisms of factors. {\it Ann. Sci. \'{E}cole Norm. Sup. (4)} {\bf 8} (1975), 383-419.

\bibitem[dJR84]{dJR84} {A. del Junco and D. Rudolph}, On ergodic actions whose self-joinings are graphs. {\it Ergodic Theory Dynam. Systems} {\bf 7} (1987), 531-557.

\bibitem[De10]{De10} {S. Deprez}, Explicit examples of equivalence relations and factors with prescribed fundamental group and outer automorphism group. {\it Preprint.} {\tt arXiv:1010.3612}

\bibitem[Ep07]{Ep07} {I. Epstein}, Orbit inequivalent actions of non-amenable groups. {\it Preprint.} {\tt arXiv:0707.4215}

\bibitem[FRW08]{FRW08} M. Foreman, D.J. Rudolph and B. Weiss, The conjugacy problem in ergodic theory. {\it Ann. of Math.} {\bf 173} (2011), 1529-1586.

\bibitem[GL07]{GL07} {D. Gaboriau and R. Lyons}, A measurable-group-theoretic solution to von Neumann's problem. {\it Invent. Math.} {\bf 177} (2009), 533-540.

\bibitem[GP03]{GP03} {D. Gaboriau and S. Popa}, An uncountable family of nonorbit equivalent actions of $\F_n$. {\it J. Amer. Math. Soc.} {\bf 18} (2005), 547-559.

\bibitem[Hj01]{Hj01} {G. Hjorth}, On invariants for measure preserving transformations. {\it Fund. Math.} {\bf 169} (2001), 51-84.

\bibitem[Io06]{Io06} {A. Ioana}, Rigidity results for wreath product II$_1$ factors. {\it J. Funct. Anal.} {\bf 252} (2007), 763-791.

\bibitem[Io07]{Io07} {A. Ioana}, Orbit inequivalent actions for groups containing a copy of $\F_2$. {\it Invent. Math.} {\bf 185} (2011), 55-73.

\bibitem[IPP05]{IPP05} {A. Ioana, J. Peterson and S. Popa}, Amalgamated free products of weakly rigid factors and calculation of their symmetry groups. {\it Acta Math.} {\bf 200} (2008), 85-153.

\bibitem[Jo82a]{Jo82a} {V.F.R Jones}, Index for subfactors. {\it Invent. Math.} {\bf 72} (1983), 1-25.

\bibitem[Jo82b]{Jo82b} {V.F.R. Jones}, A converse to Ocneanu's theorem. {\it J. Operator Theory} {\bf 10} (1983), 61-63.

\bibitem[Oc85]{Oc85} {A. Ocneanu}, Actions of discrete amenable groups on von Neumann algebras. {\it Lecture Notes in Mathematics} {\bf 1138}, Springer-Verlag, Berlin, 1985.

\bibitem[Or70]{Or70} {D. Ornstein}, On the root problem in ergodic theory. In {\it Proceedings of the Sixth Berkeley Symposium on Mathematical Statistics and Probability 1970/1971}, vol.\ II, Univ. California Press, Berkeley, 1972, pp.\ 347-356.

\bibitem[OW79]{OW79} {D. Ornstein and B. Weiss}, Ergodic theory of amenable group actions, I. {\it Bull. Amer. Math. Soc. (N.S.)} {\bf 2} (1980), 161-164.

\bibitem[Po81]{Po81} {S. Popa}, Maximal injective subalgebras in factors associated with free groups. {\it Adv. Math.} {\bf 50} (1983), 27-48.

\bibitem[Po87]{Po87} {S. Popa}, Relative dimension, towers of projections and commuting squares of subfactors. {\it Pac. J. Math.} {\bf 137} (1989), 181-207.

\bibitem[Po92]{Po92} {S. Popa}, Classification of amenable subfactors of type II. {\it Acta Math.} {\bf 172} (1994), 163-255.

\bibitem[Po01a]{Po01a} {S. Popa}, Some rigidity results for non-commutative Bernoulli shifts. {\it J. Funct. Anal.} {\bf 230} (2006), 273-328.

\bibitem[Po01b]{Po01b} {S. Popa}, On a class of type II$_1$ factors with Betti numbers invariants. {\it Ann. of Math.} {\bf 163} (2006), 809-899.

\bibitem[Po03]{Po03} {S. Popa}, Strong rigidity of II$_1$ factors arising from malleable actions of $w$-rigid groups, I. \emph{Invent. Math.} \textbf{165} (2006), 369-408.

\bibitem[Po06]{Po06} {S. Popa}, On the superrigidity of malleable actions with spectral gap. {\it J. Amer. Math. Soc.} {\bf 21} (2008), 981-1000.

\bibitem[PV06]{PV06} {S. Popa and S. Vaes}, Strong rigidity of generalized Bernoulli actions and computations of their symmetry groups. {\it Adv. Math.} {\bf 217} (2008), 833-872.

\bibitem[PV08]{PV08} {S. Popa and S. Vaes}, Actions of $\F_\infty$ whose II$_1$ factors and orbit equivalence relations have prescribed fundamental group. {\it J. Amer. Math. Soc.} {\bf 23} (2010), 383-403.

\bibitem[PV11]{PV11} S. Popa and S. Vaes, Unique Cartan decomposition for II$_1$ factors arising from arbitrary actions of free groups. {\it Acta Math.}, to appear. {\tt arXiv:1111.6951}

\bibitem[PV12]{PV12} {S. Popa and S. Vaes}, Unique Cartan decomposition for II$_1$ factors arising from arbitrary actions of hyperbolic groups. To appear in {\it  J. Reine Angew. Math.} {\tt arXiv:1201.2824}

\bibitem[Ru78]{Ru78} {D.J. Rudolph}, An example of a measure preserving map with minimal self-joinings, and applications. {\it J. Analyse Math.} {\bf 35} (1979), 97-122.

\bibitem[Va06]{Va06} {S. Vaes}, Rigidity results for Bernoulli actions and their von Neumann algebras (after Sorin Popa). S\'{e}minaire Bourbaki, exp.\ no.\ 961. {\it Ast\'{e}risque} {\bf 311} (2007), 237-294.

\bibitem[Va07]{Va07} {S. Vaes}, Explicit computations of all finite index bimodules for a family of II$_1$ factors. {\it Ann. Sci. \'{E}c. Norm. Sup\'{e}r.} {\bf 41} (2008), 743-788.

\end{thebibliography}
\end{document}